\title{Realizing uniformly recurrent subgroups}
\date{\today}
\author{Nicol\'{a}s Matte Bon}
\address{
  D-MATH -- ETH Z\"urich \\
  R\"amistrasse 101 \\
  8092 Z\"urich \\
  Switzerland}
\email{nicolas.matte@math.ethz.ch}
\author{Todor Tsankov}
\address{
  Institut de Math{\'e}matiques de Jussieu--PRG \\
  Universit\'e Paris Diderot \\
  75205 Paris \textsc{cedex} 13 \\
  France
  -- and --
  D{\'e}partement de math{\'e}matiques et applications \\
  {\'E}cole normale sup{\'e}rieure \\
  75005 Paris \\
  France}
\email{todor@math.univ-paris-diderot.fr}
\setlist[enumerate,1]{label=(\roman*), font=\normalfont}
\newcommand{\Sc}{\mathcal{S}}
\newcommand{\Sub}{\operatorname{Sub}}
\newcommand{\stab}{\operatorname{Stab}}
\newcommand{\Cub}{\operatorname{C_{ub}}}
\DeclareMathOperator{\Fix}{Fix}
\DeclareMathOperator{\Mov}{Mov}
\begin{document}

\maketitle

\begin{abstract}
We show that every uniformly recurrent subgroup of a locally compact group is the family of stabilizers of a minimal action on a compact space. More generally, every closed invariant subset of the Chabauty space is the family of stabilizers of an action on a compact space on which the stabilizer map is continuous everywhere. This answers a question of Glasner and Weiss. We also introduce the notion of a universal minimal flow relative to a uniformly recurrent subgroup and prove its existence and uniqueness.
\end{abstract}

%%%%%%%%%%%%%%%%%%%%%%%%%%%%%%%%%%%%%%%%%%%%%%%%%%

\section{Introduction}
Let $G$ be a locally compact group. Consider the space of subgroups $\Sub(G)$ endowed with the Chabauty topology \cite{Chabauty1950} and recall that a subbasis of open sets for this topology is given by sets of the form 
\[\mathcal{U}_C=\{H\in \Sub(G) : H\cap C=\varnothing\} \quad\text{ and } \quad \mathcal{U}_V=\{H\in\Sub(G) : H\cap V\neq \varnothing\},\]
where $C$ varies among the compact subsets and $V$ among the open subsets of $G$. %Equivalently a net $(H_\alpha)$ converges to $K$ in $\Sub(G)$ if and only if for every $g\in K$ there is a net $h_\alpha \in H_\alpha$ converging to $g$, and for every net $h_\alpha \in H\alpha$, all cluster points of $(h_\alpha)$ belong to $K$.
This topology makes $\Sub(G)$ a compact Hausdorff space on which $G$ acts continuously by conjugation.

%The study of $G$-invariant probability measures on $\Sub(G)$, also known as \emph{invariant random subgroups} (IRS), was initiated in \cite{Abert2014} and has recently received a lot of attention. %; see \cite{Gel} for a survey.
%% One of the basic results about IRSs is that every IRS can be realized as the push-forward measure of the stabilizer map for a measure-preserving action of $G$. This was proved in \cite{AGV} for countable groups and \cite{7s} for locally compact groups.
%IRSs arise naturally from normal subgroups and from lattices. More generally, IRSs arise from a probability measure preserving action of $G$ on a probability space $(X, \mu)$  as the push-forward of $\mu$ by the stabilizer map $\stab \colon X \to \Sub(G)$, $\stab(x) = G_x = \set{g \in G : g \cdot x = x}$, which is always measurable. A basic result about IRSs, proved in \cite{Abert2014} for countable groups and in \cite{Abert2012p} for locally compact groups, states that every IRS arises in this way. 

Glasner and Weiss~\cite{Glasner2015} initiated the study of \emph{uniformly recurrent subgroups} (URS for short), i.e., closed, invariant, minimal subsets of $\Sub(G)$. This notion can be seen as a topological analogue of the measure-theoretic one of \emph{invariant random subgroup}, a terminology coined in \cite{Abert2014}. URSs have recently attracted some attention as it turned out that they are a convenient tool to study \emph{boundary actions} which, for discrete groups, are connected to $C^*$-\emph{simplicity} \cite{Kennedy2015p, LeBoudec2016p}. 
 
As was shown by Glasner and Weiss \cite{Glasner2015}, a URS is naturally associated to every minimal action $G \actson X$ on a compact space. Namely, consider the stabilizer map $\stab \colon X \to \Sub(G)$. This map is usually not continuous. However, it is upper semi-continuous in the sense that for every net $(x_i)$ converging to $x\in X$, every cluster point of $\stab(x_i)$ in $\Sub(G)$ is contained in $\stab(x)$. This property is enough to ensure that the closure of the image of $\stab$ contains a unique URS. (This result is proved in \cite[Proposition 1.2]{Glasner2015}. See also the argument of \cite[Lemma I.1]{Auslander1977} to avoid the assumption, made throughout \cite{Glasner2015}, that $X$ is metrizable.) The unique URS contained in  $\overline{\stab(X)}$ is called  the \emph{stabilizer URS} of $G \actson X$ and will be denoted by $\Sc_G(X)$.

Conversely, Glasner and Weiss ask whether every URS arises as the stabilizer URS of a minimal action. This question is motivated in part by an analogous result for invariant random subgroups, established in \cite{Abert2014} for countable groups and in \cite{Abert2012p} for general locally compact groups. However, the method of proof of these results does not translate easily to this topological dynamical setting.

 In this paper, we answer the question of Glasner and Weiss question in the affirmative. More generally, we show the following.
\begin{theorem}
  \label{t:Chabauty}
  Let $G$ be a locally compact group and let $\mathcal{H} \sub \Sub(G)$ be a closed, invariant subset. Then there exists a continuous action of $G$ on a compact space $X$ such that the stabilizer map $\stab \colon X \to \Sub(G)$ is everywhere continuous and its image is equal to $\mathcal{H}$. If $G$ is second countable, $X$ can be chosen to be metrizable.
\end{theorem}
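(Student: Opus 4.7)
The plan is to construct $X$ as a compact $G$-invariant subset of a suitably chosen product space. The guiding observation is that for each $H \in \mathcal H$, the homogeneous space $G/H$ already carries a continuous $G$-action whose stabilizer map $gH \mapsto gHg^{-1}$ is Chabauty-continuous into $\mathcal H$; the real challenge is to compactify all these spaces together in a compatible way.

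My working approach is the following. Choose a compact $G$-space $W$ large enough that for each $H \in \mathcal H$ there is a point $p_H \in W$ with $\stab_W(p_H) = H$ --- for instance, take $W = \hat G$, the greatest ambit of $G$ (the Gelfand spectrum of $\Cub(G)$), with $p_H$ a generic preimage of $eH$ under the canonical $G$-map $\hat G \to \beta_u(G/H)$. Consider the compact $G$-space $\mathcal H \times W$ with the diagonal action $g\cdot(K,q) = (gKg^{-1}, g\cdot q)$, and let $X$ be the $G$-orbit closure of the set $S = \{(H, p_H) : H \in \mathcal H\}$. The first-coordinate projection $\pi \colon X \to \mathcal H$ is then continuous, $G$-equivariant, and surjective. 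On the orbit of $S$ one computes directly
\[\stab(g\cdot(H,p_H)) = N_G(gHg^{-1}) \cap gHg^{-1} = gHg^{-1} = \pi(g\cdot(H,p_H)),\]
and upper semi-continuity extends the inclusion $\pi(x) \subseteq \stab(x)$ to all of $X$.

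The key remaining step is lower semi-continuity of the stabilizer map, i.e.\ the reverse inclusion $\stab(x) \subseteq \pi(x)$. Given a convergent net $(g_i H_i g_i^{-1}, g_i\cdot p_{H_i}) \to (K, p)$ and an element $g \in \stab((K,p))$, the plan is to produce a net $g_i' \in g_i H_i g_i^{-1}$ with $g_i' \to g$; combined with the Chabauty convergence $g_i H_i g_i^{-1} \to K$, this forces $g \in K$. Constructing such an approximation from the hypothesis that $g$ merely fixes $p$ is the main obstacle, and it is precisely the place where $W$ and the points $p_H$ must be engineered --- perhaps by restricting to a smaller $G$-invariant subspace, or by replacing $W$ with a richer compactification having a suitable ``freeness'' property of the $G$-action at boundary points --- to prevent stabilizers from jumping up in limits. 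In the second-countable case, $\Cub(G)$ is separable, $\hat G$ is metrizable, and hence so is $X$.
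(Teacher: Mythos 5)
Your proposal correctly identifies where the difficulty lies, but it does not resolve it, and the heart of the theorem is exactly the step you leave open. The paper's proof takes $Z = \bigsqcup_{H \in A} G/H$ (for $A \subseteq \mathcal{H}$ whose union of orbits is dense in $\mathcal{H}$) and $X = S(Z)$, the Samuel compactification, and the entire technical content is a Veech-type separation lemma: for every $g \in G$ and every neighborhood $V$ of $1_G$ there is a bounded, uniformly continuous $F \colon Z \to \mathbb{R}^n$ with $\|F(gz) - F(z)\|_\infty \geq 1/4$ whenever $gz \notin Vz$. This is proved by constructing a right-invariant continuous pseudometric adapted to $g$ and $V$, pushing it down to each $G/H$, taking a maximal $1/8$-separated set, coloring the resulting bounded-degree graph, and using the distance functions to the color classes; crucially the number of colors is uniform in $H$, so the functions coalesce over the disjoint union. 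It is precisely this lemma that rules out stabilizers jumping up at points of $X \setminus Z$ and yields the lower semicontinuity you need. Your sketch defers this step to engineering $W$ and the points $p_H$, ``perhaps by restricting to a smaller subspace or a richer compactification'' --- but that is the whole problem, not a detail, so the argument as written has a genuine gap.

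There are also two concrete errors in the setup. First, with $W = \hat{G}$ the greatest ambit, Veech's theorem says the action of $G$ on $\hat{G}$ is \emph{free} for locally compact $G$, so no point of $\hat{G}$ has stabilizer $H$ unless $H = \{1_G\}$; in particular a preimage of $eH$ under the canonical map $\hat{G} \to S(G/H)$ has trivial stabilizer, not $H$. One must work with $S(G/H)$ itself, where the point $eH$ does have stabilizer exactly $H$ --- i.e., essentially with the paper's $S(Z)$, at which point the factor $\mathcal{H}$ in your product becomes redundant (once $\stab$ is known to be continuous on $S(Z)$, its image is automatically the closure of the orbits of the chosen subgroups). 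Second, $\Cub(G)$ is \emph{not} separable for non-compact second countable $G$ (e.g.\ for $G = \mathbb{Z}$ it is $\ell^\infty(\mathbb{Z})$), so neither $\hat{G}$ nor your $X$ is metrizable; the paper instead extracts a separable $G$-invariant subalgebra of $\Cub(Z)$ containing countably many of the separating functions (using that $G$ is Lindel\"of) and passes to its Gelfand spectrum to obtain a metrizable quotient.
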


In the above result, $\cH$ is not assumed to be a URS, and therefore the action $G \actson X$ cannot be chosen to be minimal in general. However if $\cH$ is a URS, the continuity of $\stab$ implies that every minimal invariant subset $Y$ of $X$ verifies the same conclusion.
  \begin{cor}\label{c:URS:stab}
 Every URS of a locally compact group arises as the stabilizer URS of a minimal action on a compact space. Moreover, the action can be chosen so that the stabilizer map is continuous.
 \end{cor}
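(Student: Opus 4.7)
The plan is to deduce Corollary~\ref{c:URS:stab} from Theorem~\ref{t:Chabauty} by passing to a minimal subflow. First I would apply Theorem~\ref{t:Chabauty} to the given URS $\cH \sub \Sub(G)$, viewed merely as a closed invariant subset of the Chabauty space, obtaining a compact $G$-space $X$ on which the stabilizer map $\stab : X \to \Sub(G)$ is continuous and has image exactly $\cH$. Zorn's lemma, applied to the family of non-empty closed $G$-invariant subsets of $X$ ordered by reverse inclusion, then produces a minimal such subset $Y \sub X$.

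Next I would check that $Y$ fulfills the conclusion. Since $\stab|_Y$ is continuous and equivariant (because $\stab(gy) = g\stab(y)g^{-1}$), its image $\stab(Y)$ is a non-empty closed $G$-invariant subset of $\cH$. Minimality of $\cH$ inside $\Sub(G)$ then forces $\stab(Y) = \cH$. In particular $\overline{\stab(Y)} = \cH$ is itself a URS, so by the defining property of the stabilizer URS one concludes $\Sc_G(Y) = \cH$. Continuity of $\stab|_Y$ is inherited from that on $X$, giving the ``moreover'' clause.

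Essentially no genuine obstacle remains once Theorem~\ref{t:Chabauty} is granted: the only additional input is the routine observation that restricting a continuous equivariant map to a minimal subflow, together with minimality of the target URS, preserves surjectivity onto $\cH$. All the actual work lies in Theorem~\ref{t:Chabauty} itself, and in particular in arranging that the stabilizer map is \emph{everywhere} continuous, which is what allows $\stab(Y)$ to be closed and thus allows the minimality of $\cH$ to be invoked.
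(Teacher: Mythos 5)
Your proposal is correct and is essentially the paper's own argument: the paper likewise deduces the corollary from Theorem~\ref{t:Chabauty} by observing that any minimal invariant subset $Y$ of the space $X$ produced there works, since continuity of $\stab$ makes $\stab(Y)$ a closed invariant subset of the minimal set $\cH$, forcing $\stab(Y)=\cH$ and hence $\Sc_G(Y)=\cH$.
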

If $\cH=\set[big]{\set{1_G}}$, Theorem~\ref{t:Chabauty} recovers a classical theorem in topological dynamics, due to Veech~\cite{Veech1977} (and previously to Ellis~\cite{Ellis1960} for discrete groups), stating that every locally compact group admits a free action on a compact space. The proof of Theorem~\ref{t:Chabauty} is inspired by the proof of this result.

% It is worth recalling that Veech's therem (and in particular Theorem \ref{t:Chabauty}) may fail for non-locally compact topological groups; see \cite{Pestov} for a discussion. 
 \medskip 
 
Recall that for every topological group $G$ there exists a minimal compact $G$-space $M(G)$ which is \emph{universal}, meaning that every other minimal compact $G$-space is an equivariant continuous factor of $M(G)$. While the existence of $M(G)$ is not difficult to establish, its uniqueness up to conjugation is more delicate and was proved by Ellis \cite{Ellis1960} using his theory of right topological semigroups. An equivalent formulation of Veech's theorem is therefore that if $G$ is locally compact, then the action of $G$ on $M(G)$ is free. 

We show that among spaces satisfying Corollary~\ref{c:URS:stab} there is a unique {universal} one in the following sense. Given a locally compact group $G$ and a URS $\cH$ of $G$, we say that a compact $G$-space $X$ is \emph{subordinate} to $\cH$ if every subgroup in $\cH$ fixes a point of $X$. 

\begin{theorem}
Let $G$ be a locally compact group, and $\cH$ be a URS of $G$. Then there exists a unique, compact, minimal $G$-space $M(G, \cH)$ which is subordinate to $\cH$ and is universal among minimal compact $G$-spaces that are subordinate to $\cH$.
\end{theorem}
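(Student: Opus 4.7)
My plan is to construct $M(G, \cH)$ as a minimal subflow of a suitable universal $G$-ambit, derive universality from that ambit's universal property, and establish uniqueness via the theory of compact right-topological semigroups. Throughout, fix once and for all an element $H_0 \in \cH$.

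The first step is to realize the relevant universal ambit: the terminal pointed compact $G$-flow $(A, p_0)$ with $H_0 \subseteq \stab(p_0)$ and $\overline{G \cdot p_0} = A$, whose existence follows by standard constructions (e.g., as the Gelfand spectrum of an appropriate algebra of uniformly continuous functions on the coset space $G/H_0$). Its universal property is that every compact $G$-flow $Z$ with a distinguished point $z_0$ satisfying $H_0 \subseteq \stab(z_0)$ and $\overline{G \cdot z_0} = Z$ receives a unique $G$-equivariant continuous map from $(A, p_0)$ carrying $p_0$ to $z_0$. I would then define $M(G, \cH)$ to be any minimal closed $G$-invariant subset of $A$, which exists by Zorn's lemma.

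Subordination is checked as follows. Given $m \in M(G, \cH)$, density of $G \cdot p_0$ produces a net $(g_i)$ in $G$ with $g_i \cdot p_0 \to m$, and $\stab(g_i \cdot p_0) \supseteq g_i H_0 g_i^{-1}$. Chabauty compactness lets me pass to a subnet so that $g_i H_0 g_i^{-1}$ converges to some $H$, and since $\cH$ is the $G$-conjugacy orbit closure of $H_0$ by minimality of the URS, necessarily $H \in \cH$. Upper semi-continuity of $\stab$ together with monotonicity of Chabauty limits yields $H \subseteq \stab(m)$. Since every $H' \in \cH$ is in turn a Chabauty limit of $G$-conjugates of $H$, running the same argument with $m$ in place of $p_0$ produces, for each $H' \in \cH$, a point of $M(G, \cH)$ fixed by $H'$. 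Universality is then immediate: for any minimal $\cH$-subordinate compact $G$-flow $Z$, subordination provides $z_0 \in Z$ with $H_0 \subseteq \stab(z_0)$, minimality gives $\overline{G \cdot z_0} = Z$, and the universal property of $A$ yields a $G$-equivariant continuous factor $\pi \colon A \to Z$ with $\pi(p_0) = z_0$; its restriction to $M(G, \cH)$ has closed, $G$-invariant, nonempty image in $Z$, hence is surjective by minimality.

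The main obstacle is uniqueness, namely the claim that any two minimal subflows of $A$ are $G$-equivariantly homeomorphic. The plan is to adapt Ellis's uniqueness argument for the classical $M(G)$, working with the Ellis enveloping semigroup $E = E(A)$: this is a compact right-topological semigroup, and minimal subflows of $A$ are exactly the sets $L \cdot a$ for $L$ a minimal left ideal of $E$ and $a \in A$. By the structure theorem for minimal ideals in such a semigroup, any two minimal left ideals of $E$ are isomorphic as right $E$-sets via right multiplication by an idempotent of the kernel; I would then verify that this isomorphism descends to a $G$-equivariant homeomorphism between the corresponding minimal subflows, yielding uniqueness of $M(G, \cH)$ up to isomorphism.
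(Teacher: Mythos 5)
Your existence and universality steps coincide with the paper's: $A$ is the Samuel compactification $S(G/H_0)$ with basepoint the coset $H_0$, $M(G,\cH)$ is a minimal subflow, universality follows by extending orbit maps $G/H_0\to Z$ through $S(G/H_0)$, and your subordination check is the standard upper-semicontinuity argument. The gap is in uniqueness, and it is twofold. First, the reduction is wrong: even if every two minimal subflows of $A$ were $G$-isomorphic, the theorem would not follow. An arbitrary universal minimal flow $N$ subordinate to $\cH$ is only known to factor onto, and be factored onto by, your $M$; two minimal flows that factor onto each other need not be isomorphic. To conclude $M\cong N$ one needs $M$ to be \emph{coalescent} (every continuous $G$-endomorphism of $M$ is an automorphism), so that the composite $M\to N\to M$ is invertible and hence $M\to N$ is injective. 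Coalescence is not a general property of minimal flows and is precisely what must be proved.

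Second, the enveloping-semigroup step does not go through as described. It is true that every minimal subflow of $A$ has the form $Lp_0$ for a minimal left ideal $L$ of $E=E(A)$, and that right multiplication by a suitable idempotent gives a $G$-isomorphism $L\to L'$. But the evaluation map $L\to Lp_0$, $p\mapsto pp_0$, is in general a non-injective factor map (already for a metrizable equicontinuous ambit, $E(A)$ is a compact group properly covering $A$), and the isomorphism $p\mapsto pv$ has no reason to descend: $pp_0=qp_0$ only says that $p$ and $q$ agree at the single point $p_0$ and gives no control on $p(vp_0)$ versus $q(vp_0)$. So at best you show that all minimal subflows of $A$ are factors of one and the same minimal flow $L$, which is not enough. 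The paper's remedy is to use a different semigroup, living inside $A$ rather than inside $A^A$: by the universal property of $(A,p_0)$, each $\omega\in\Fix_{H_0}(A)$ determines a unique $G$-map $r_\omega\colon A\to A$ with $r_\omega(p_0)=\omega$, making $\Fix_{H_0}(M)$ a compact right-topological semigroup. Ellis's idempotent theorem then produces a retraction $r_\iota\colon A\to M$, every $G$-endomorphism of $M$ is some $r_\omega$ with $\omega\in\Fix_{H_0}(M)$, and the usual left-ideal and idempotent argument shows each such $r_\omega$ is invertible, i.e., $M$ is coalescent. That is the ingredient your outline is missing.
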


We call $M(G, \cH)$ the \emph{universal minimal flow of $G$ relative to $\cH$}. Corollary~\ref{c:URS:stab} is thus equivalent to saying that the stabilizer map $\stab\colon M(G, \cH)\to \Sub(G)$ is continuous and its image is precisely $\cH$.

Finally, we characterize under what conditions the space $M(G, \cH)$ is metrizable (see Theorem \ref{t:metrisable}).

% \begin{theorem}
%   \label{th:relative-UMF}
%   Let $G$ be a locally compact group and $\cH$ be a URS of $G$. Then there exists a unique $G$-flow 
% \end{theorem}

\subsection*{Related work} In an independent work \cite{Elek2017p} that appeared while this paper was being completed, G.~Elek proves Corollary~\ref{c:URS:stab} for finitely generated groups using a  different method. In another recent preprint, T.~Kawabe~\cite{Kawabe2017p} has obtained a proof of Corollary~\ref{c:URS:stab} for countable discrete groups when the URS consists of amenable subgroups.

\subsection*{Acknowledgements} We are grateful to Adrien Le Boudec for useful discussions. We would also like to thank Uri Bader and Pierre-Emmanuel Caprace for indicating that \cite[Lemma I.1]{Auslander1977} allows to avoid the metrizability assumptions in \cite{Glasner2015} in the definition of a stabilizer URS. Finally, we are grateful to Eli Glasner and to the anonymous referee for useful remarks and suggestions. Research on this paper was partially supported by the ANR projects GAMME (ANR-14-CE25-0004) and AGRUME (ANR-17-CE40-0026).

%%%%%%%%%%%%%%%%%%%%%%%%%%%%%%%%%%%%%%%%%%%%%%%%%%
\section{Proof of Theorem \ref{t:Chabauty}}
\subsection{Case of discrete groups}
If $G$ is a discrete group, Theorem~\ref{t:Chabauty} is considerably simpler, therefore we begin by giving a proof in this special case.  

Let $Z$ be a discrete set endowed with a group action $G \actson Z$ and $g \in G$. We denote by $\Fix_g(Z)$ the set of points fixed by $g$, and by $\Mov_g(Z)$ its complement. 
 Let $\beta Z$ be the Stone--\v{C}ech compactification of $Z$. By the universal property of the Stone--\v{C}ech compactification, the action of  $G$ on $Z$ extends to an action by homeomorphisms on $\beta Z$.   Given a subset $W \sub Z$, the notation $\overline{W}$ refers to the closure in $\beta Z$.
 
Ellis has shown in \cite{Ellis1960} that the action $G \actson \beta G$ is free. The following lemma is essentially a generalization of this fact.

\begin{lemma}
  \label{l:Stone-Cech:discrete}
Let $G\actson Z$ be a group action on a discrete set $Z$. Then for every  $g\in G$ we have  $\Mov_g(\beta Z)= \cl{\Mov_g(Z)}$. In particular, the stabilizer map $\stab \colon \beta Z \to \Sub(G)$ is continuous.
\end{lemma}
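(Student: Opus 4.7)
The plan is to prove the equality $\Mov_g(\beta Z) = \overline{\Mov_g(Z)}$ (from which the ``in particular'' clause follows by a routine check), establishing the two inclusions by very different arguments.

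\emph{Easy inclusion} ($\subseteq$). This uses only the Stone structure of $\beta Z$. Suppose $p \in \beta Z$ has $gp \neq p$. I would separate $p$ and $gp$ by disjoint clopen neighborhoods of the form $\overline{S}$ and $\overline{T}$, where $S, T \subseteq Z$ are disjoint. By continuity of $g$ on $\beta Z$ we have $g^{-1}(\overline{T}) = \overline{g^{-1}T}$, so
\[p \in \overline{S} \cap \overline{g^{-1}T} = \overline{S \cap g^{-1}T},\]
using the standard fact that closures of subsets of $Z$ in $\beta Z$ behave well under finite intersections. Any $z \in S \cap g^{-1}T$ satisfies $z \in S$ and $gz \in T$, so $gz \neq z$; hence $p \in \overline{\Mov_g(Z)}$.

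\emph{Hard inclusion} ($\supseteq$). Set $A = \Mov_g(Z)$, so that $\langle g\rangle$ acts freely on the discrete set $A$. The key step is to show that the induced action on $\overline{A}$, which may be identified with $\beta A$, is still free. To do this, I would use the axiom of choice to pick a transversal for the $\langle g\rangle$-orbits on $A$, and use it to partition $A = A_0 \sqcup \cdots \sqcup A_{n-1}$ into finitely many pieces such that $g A_i \cap A_i = \varnothing$ for every $i$ (taking $n=2$ and alternating by parity of orbit index if $g$ has infinite order, and $n = d$ with one part per position on each length-$d$ cycle if $g$ has finite order $d$). Viewing a point $p \in \beta A$ as an ultrafilter on $A$, if $gp = p$ then since $A \in p$ some $A_i \in p$; but the identification $g\cdot p = \{gS : S \in p\}$ forces $g A_i \in p$ as well, contradicting $A_i \cap g A_i = \varnothing$.

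\emph{Continuity of $\stab$.} Once $\Mov_g(\beta Z) = \overline{\Mov_g(Z)}$ is established, both $\Mov_g(\beta Z)$ and $\Fix_g(\beta Z)$ are clopen in $\beta Z$ for every $g \in G$. The Chabauty subbasis on $\Sub(G)$ consists of sets $\{H : H \cap C = \varnothing\}$ with $C \subseteq G$ finite (since $G$ is discrete) and $\{H : H \cap V \neq \varnothing\}$ with $V \subseteq G$ arbitrary; their preimages under $\stab$ are respectively $\bigcap_{g \in C} \Mov_g(\beta Z)$ and $\bigcup_{g \in V} \Fix_g(\beta Z)$, both open. The main obstacle in the whole argument is the combinatorial partition used in the hard inclusion; everything else is a direct application of the Stone--\v{C}ech machinery.
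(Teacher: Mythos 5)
Your overall strategy is essentially the paper's: a partition $A = A_0 \sqcup \cdots \sqcup A_{n-1}$ of $\Mov_g(Z)$ with $gA_i \cap A_i = \varnothing$ is exactly a proper colouring of the graph joining $x$ to $gx$, which the paper encodes as a function $f \colon Z \to \{0,1,2\}$ with $|f(gx)-f(x)| \geq 1$ on $\Mov_g(Z)$; the paper then extends $f$ continuously to $\beta Z$ instead of arguing with ultrafilters, and obtains the easy inclusion from $\beta Z = \overline{\Fix_g(Z)} \cup \overline{\Mov_g(Z)}$ by density rather than by clopen separation. Those differences are cosmetic; your verification of continuity of $\stab$ from the Chabauty subbasis is also fine (the paper delegates it to a citation).

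There is, however, a genuine error in your combinatorial step: the dichotomy by the order of $g$ does not control the orbit structure of $g$ on $Z$. An element of infinite order can act with finite orbits: take $G = \mathbb{Z}$ acting on $Z = \mathbb{Z}/3\mathbb{Z}$ by translation, so $g = 1$ has infinite order but $\Mov_g(Z)$ is a single $3$-cycle. Your parity colouring then assigns the same colour to $x_0$ and $g^2 x_0$, while $g \cdot (g^2 x_0) = x_0$, so $gA_0 \cap A_0 \neq \varnothing$ and the ultrafilter contradiction evaporates. (Relatedly, your assertion that $\langle g \rangle$ acts freely on $A$ is false in general: only $g$ itself is guaranteed fixed-point free there, and in the finite-order case your ``$n=d$ parts'' must also accommodate orbits of length $d' < d$ dividing $d$.) The fix is exactly what the paper's choice of three values is for: on each $\langle g\rangle$-orbit in $A$ the conflict graph is a bi-infinite path or a cycle of length at least $2$, hence properly $3$-colourable, so one may always take $n = 3$ uniformly over all orbits. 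With that correction the rest of your argument goes through.
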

\begin{proof}
  Clearly $\Fix_g(\beta Z) \supseteq \cl{\Fix_g(Z)}$. Moreover, $\beta Z = \Fix_g(\beta Z) \sqcup \Mov_g(\beta Z) = \cl{\Fix_g(Z)} \cup \cl{\Mov_g(Z)}$ (the second equality follows from the density of $Z$), and therefore  $\Mov_g(\beta Z) \sub \cl{\Mov_g(Z)}$. Let us check the reverse inclusion. We can find a function $f \colon Z \to \{0,1,2\}$ with the property that for every $x \in \Mov_g(Z)$, we have $|f(gx)-f(x)| \geq 1$ (such a function can be easily defined separately on every $g$-orbit). The function  $f$ extends to a continuous function on $\beta Z$ that we still denote by $f$. It follows that for every $\omega \in \cl{\Mov_g(Z)}$, we have $|f(g\omega)-f(\omega)| \geq 1$, and therefore $\omega \in \Mov_g(\beta Z)$, showing that $\cl{\Mov_g(Z)} = \Mov_g(\beta Z)$. This  implies in particular that the set $ \Mov_g(\beta Z)$ is clopen   
 for every $g\in G$, which is equivalent to the continuity of the stabilizer map (see, e.g., \cite[Lemma 2.2]{LeBoudec2016p}).
\end{proof}

Given a collection of subgroups $A \sub \Sub(G)$, we write $Z_A=\sqcup_{H\in A} G/H$ and endow it with the discrete topology. There is an obvious action $G \actson Z_A$, by letting $G$ act separately on each coset space.
\begin{prop}
  \label{p:proof-countable}
Let $G$ be a discrete group and $\cH \sub \Sub(G)$ be a closed invariant subset. Let $A \sub \cH$ be a subset such that the set of all conjugates of subgroups in $A$ is dense in $\cH$. Then the compact $G$-space $X = \beta Z_A$ verifies the conclusion of Theorem~\ref{t:Chabauty}.
\end{prop}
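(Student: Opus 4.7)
The plan is to derive the proposition as a direct consequence of Lemma \ref{l:Stone-Cech:discrete}. The action $G \actson Z_A$ is by permutations of a discrete set, so by the universal property of the Stone--\v{C}ech compactification it extends to a continuous action $G \actson \beta Z_A$. Lemma \ref{l:Stone-Cech:discrete} then immediately yields that the stabilizer map $\stab \colon \beta Z_A \to \Sub(G)$ is continuous everywhere, which is the first half of the conclusion of Theorem \ref{t:Chabauty}.

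It remains to identify the image of $\stab$ with $\cH$. First I would compute the stabilizers on the dense subset $Z_A$: for $H \in A$ and $g \in G$, the stabilizer of $gH \in G/H \sub Z_A$ is $gHg^{-1}$, so $\stab(Z_A) = \set{gHg^{-1} : H \in A,\, g \in G}$, which by hypothesis is dense in $\cH$. For the inclusion $\stab(\beta Z_A) \sub \cH$, I would use that $Z_A$ is dense in $\beta Z_A$ and $\stab$ is continuous, so $\stab(\beta Z_A) \sub \cl{\stab(Z_A)} \sub \cH$, where the last inclusion uses that $\cH$ is closed. For the reverse inclusion, I would observe that $\stab(\beta Z_A)$ is compact (as the continuous image of a compact set) and therefore closed in the Hausdorff space $\Sub(G)$; since it contains $\stab(Z_A)$, which is dense in $\cH$, it must contain all of $\cH$.

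There is essentially no significant obstacle here: once Lemma \ref{l:Stone-Cech:discrete} is available, the proposition reduces to a bookkeeping argument comparing the two descriptions of the image, $\cl{\stab(Z_A)}$ on one side and $\cH$ on the other. The only point that deserves attention is the use of compactness of $\beta Z_A$ together with Hausdorffness of $\Sub(G)$ to promote density of $\stab(Z_A)$ in $\cH$ to the equality $\stab(\beta Z_A) = \cH$.
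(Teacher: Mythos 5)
Your proposal is correct and follows essentially the same route as the paper: continuity from Lemma~\ref{l:Stone-Cech:discrete}, density of $\stab(Z_A)$ in $\cH$ from the hypothesis on $A$, and then density of $Z_A$ in $\beta Z_A$ plus compactness to conclude $\stab(\beta Z_A) = \cH$. The paper states this last step more tersely, but your bookkeeping (continuity gives $\stab(\beta Z_A) \sub \cl{\stab(Z_A)} \sub \cH$, compactness gives the reverse inclusion) is exactly what is implicit there.
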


\begin{remark}
Of course, one can choose $A = \cH$. However, if $\cH$ is assumed to be a URS, then one can simply choose $A=\{H\}$ for any $H \in \cH$, so that $X = \beta (G/H)$.
\end{remark}

\begin{proof}
Continuity of the stabilizer map was already proved in Lemma~\ref{l:Stone-Cech:discrete}. Moreover, the image of $Z_A\sub \beta Z_A$ is a dense subset of $\cH$ by the assumption on $A$. Since $Z_A$ is dense in $\beta Z_A$, it follows that the image of $\beta Z_A$ is precisely $\cH$.
\end{proof}

For the reduction to a metrizable space when $G$ is countable discrete, we refer directly to the general case of a second countable locally compact group (cf. Proposition~\ref{p:metrizable}). However, we note that in this case, one can always choose a metrizable realization of the URS that is zero-dimensional.

%%%%%%%%%%%%%%%%%%%%%%%%%%%%%%%%%%%%%%%%%%%%%%%%%%

\subsection{Case of locally compact groups}
%Given a uniform space $Z$ we denote by $\Cub(Z)$ the abelian $C^*$-algebra of uniformly continuous bounded functions on $Z$, and  let $S(Z)$ be its Gelfand spectrum (this is sometimes referred to as the \df{Samuel compactification of the uniform space $Z$}). The space $X$ appearing in Theorem \ref{t:Chabauty} will be equal to $S(Z)$ for $Z$ a suitable disjoint union of homogeneous spaces of $G$.

Let $G$ be a locally compact group. We will always see $G$ as a uniform space endowed with the \df{right uniformity} whose entourages are
\begin{equation*}
  \cU_V = \set{(g_1, g_2) : \exists v \in V \ vg_1 = g_2},
\end{equation*}
where $V$ varies over symmetric neighborhoods of $1_G$. (Note that some authors call this the \emph{left} uniformity instead.)

A pseudometric $d$ on $G$ is called \df{right-invariant} if $d(g_1h, g_2h) = d(g_1, g_2)$ for all $g_1, g_2, h \in G$, and is said to be 
(right) uniformly continuous if it is uniformly continuous as a function $d\colon G\times G\to \R$. Note that every right-invariant, continuous pseudo-metric is uniformly continuous (see the argument in the proof of the next lemma). In the sequel, we will need the existence of uniformly continuous pseudometrics with some suitable properties.
\begin{lemma}
  \label{l:pseudometric}
  Let $g \in G$, $g \neq 1_G$ and let $U$ be a neighborhood of $1_G$. Then there exists a right-invariant, continuous pseudometric $d$ on $G$ such that:
  \begin{enumerate}
  \item $d \leq 8$;
  \item $1/2 \leq d(1_G, g) \leq 1$;
  \item the $d$-ball of radius $4$ around $1_G$ is relatively compact;
  \item \label{l:pseudometric:i:moves} $\set{x \in G : d(1_G, x) < 1/2} \sub U$.
  \end{enumerate}
\end{lemma}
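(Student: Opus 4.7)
The plan is to apply the Birkhoff--Kakutani construction of a right-invariant continuous pseudometric, choosing the defining sequence of neighborhoods so that the four prescribed scales line up, and then truncating the result at height $8$.

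First, using that $g \neq 1_G$ and local compactness of $G$, pick a symmetric, open, relatively compact neighborhood $V_1$ of $1_G$ with $V_1 \subseteq U$ and $g \notin V_1$. Define
\[
V_0 := V_1^2 \cup V_1 g V_1 \cup V_1 g^{-1} V_1,
\]
which is symmetric (using $V_1^{-1} = V_1$), open, contains both $V_1^2$ and $g$, and is relatively compact (as a finite union of products of relatively compact sets). For $n \geq 1$, pick inductively symmetric open neighborhoods $V_{n+1}$ of $1_G$ with $V_{n+1}^2 \subseteq V_n$, arranged so that $(V_n)_{n \geq 1}$ forms a neighborhood basis at $1_G$. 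For $n \in \{-1, -2, -3\}$, set $V_n := V_{n+1}^2$, which is symmetric, open, and again relatively compact. Finally, set $V_n := G$ for $n < -3$; the relation $V_{n+1}^2 \subseteq V_n$ then holds for every integer $n$.

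Now form the length function $f\colon G \to [0, 16]$ defined by $f(x) = \inf\{2^{-n} : x \in V_n\}$, and let
\[
d(x, y) := \inf\Big\{\sum_{i=1}^k f(y_i y_{i-1}^{-1}) : k \geq 1,\ y_0 = x,\ y_k = y\Big\}
\]
be the associated right-invariant chain pseudometric. The standard Birkhoff--Kakutani estimate, which follows from $V_{n+1}^2 \subseteq V_n$, gives $f(xy^{-1})/2 \leq d(x, y) \leq f(xy^{-1})$; and $d$ is uniformly continuous since the $V_n$ for large $n$ form a neighborhood basis at $1_G$. Setting $\tilde d := \min(d, 8)$ preserves right-invariance, the pseudometric axioms, and continuity.

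It remains to verify the four properties. Property (i) is immediate from the truncation. For (ii), the construction ensures $g \in V_0 \setminus V_1$, whence $f(g) = 1$ and therefore $\tilde d(1_G, g) = d(1_G, g) \in [1/2, 1]$. For (iv), if $\tilde d(1_G, x) < 1/2$, then $f(x) \leq 2 d(1_G, x) < 1$, forcing $x \in V_1 \subseteq U$. For (iii), if $\tilde d(1_G, x) \leq 4$, then $d(1_G, x) \leq 4$, hence $f(x) \leq 8$, which places $x$ in the relatively compact set $V_{-3}$. The only delicate point is the index bookkeeping: the four scales (\emph{small} enough to lie in $U$, \emph{medium} to separate $g$ from $1_G$, \emph{large} but still relatively compact, and the final cap at $8$) must all be laid out consistently with $V_{n+1}^2 \subseteq V_n$, which is exactly what the above setup of $V_1, V_0$ and the doublings in both directions accomplish.
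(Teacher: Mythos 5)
There is a genuine gap at the heart of your argument: you build the whole ladder of neighborhoods around the \emph{doubling} condition $V_{n+1}^2 \subseteq V_n$ and then invoke ``the standard Birkhoff--Kakutani estimate'' $\tfrac12 f(xy^{-1}) \leq d(x,y) \leq f(xy^{-1})$ as if it followed from doubling. It does not. The standard chain-length lemma (Berberian's Lemma~6.2, which the paper uses, or equivalently the metrization lemma in Kelley) requires the \emph{tripling} condition $V_{n+1}^3 \subseteq V_n$, i.e.\ the three-point implication $\rho(x_0,x_1), \rho(x_1,x_2), \rho(x_2,x_3) \leq \eps \Rightarrow \rho(x_0,x_3) \leq 2\eps$. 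The proof of the lower bound $d \geq \rho/2$ splits a chain of total weight $S$ at the point where the partial sum first exceeds $S/2$, producing \emph{three} pieces each of weight at most $S$, and then applies the three-point implication once; with only the two-point (doubling) implication the induction does not close (combining pairwise yields $4S$ where the inductive hypothesis demands $2S$). Since all four of your verifications --- $f(g) \leq 2\,d(1_G,g)$ for (ii), $f(x) \leq 2\,d(1_G,x)$ for (iii) and (iv) --- rest on exactly this lower bound with the constant $2$, the gap is load-bearing: even if some weaker bound $d \geq f/4$ could be salvaged by a more elaborate bracketing argument, your numerical thresholds (e.g.\ the lower bound $1/2 \leq d(1_G,g)$) would fail. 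The fix is exactly what the paper does: take $V_{n+1}^3 \subseteq V_n$ throughout, with $V_1^3 \subseteq U$, $V_{-1}=V_0^3$, $V_{-2}=V_{-1}^3$, and $V_{-3}=G$ (which also makes $\rho \leq 8$ automatic, so no truncation is needed).

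A secondary point: you require that $(V_n)_{n \geq 1}$ ``forms a neighborhood basis at $1_G$.'' A general locally compact group need not be first countable, so no countable family can be a neighborhood basis; fortunately this is also unnecessary, since continuity (indeed uniform continuity) of $d$ only needs $|d(ux,vy)-d(x,y)| \leq f(u)+f(v)$ together with the fact that each $V_n$ is a neighborhood of $1_G$, so that $f(u) < 2^{-n}$ for $u \in V_n$. The rest of your setup (the choice of $V_0$ guaranteeing $g \in V_0 \setminus V_1$, the relative compactness bookkeeping, and the harmless truncation $\min(d,8)$) is fine once the tripling condition is restored.
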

\begin{proof}
  We adapt the argument of the proof of the Birkhoff--Kakutani metrization theorem from \cite{Berberian1974}. Without loss of generality, we may assume that $U$ is \df{symmetric} ($U = U^{-1}$), relatively compact, and that $g \notin U$. Let $V_0 = U \cup gU \cup (gU)^{-1}$, $V_{-1} = V_0^3$, $V_{-2} = V_{-1}^3$, $V_{-3} = G$; let $V_1$ be a symmetric neighborhood of $1_G$ such that $V_1^3 \subseteq U$, and for each $n \geq 1$, let $V_{n+1}$ be a symmetric neighborhood of $1_G$ such that $V_{n+1}^3 \subseteq V_n$. Thus for all $n \geq -3$, $V_n$ is symmetric and $V_{n+1}^3 \subseteq V_n$. Define $\rho \colon G^2 \to \R$ by
  \begin{equation*}
    \rho(x, y) = \inf \set{2^{-n} : xy^{-1} \in V_n}
  \end{equation*}
  and $d \colon G^2 \to G$ by
  \begin{equation*}
    d(x, y) = \inf \set{\sum_{i=0}^{k-1} \rho(x_i, x_{i+1}) : x_0 = x, x_k = y, x_1, \ldots, x_{k-1} \in G}.
  \end{equation*}
  We have that $\rho$ is symmetric, right-invariant and
  \begin{equation*}
    \rho(x_0, x_1) \leq \eps \And \rho(x_1, x_2) \leq \eps \And \rho(x_2, x_3) \leq \eps
      \implies \rho(x_0, x_3) \leq 2\eps.
  \end{equation*}

  By \cite{Berberian1974}*{Lemma~6.2}, $d$ is a right-invariant pseudometric on $G$ that satisfies
  \begin{equation*}
    \frac12 \rho(x, y) \leq d(x, y) \leq \rho(x, y), \quad \text{ for all } x, y \in G.
  \end{equation*}
By the triangle inequality and right invariance, we have
  \begin{equation*}
  |d(ux,vy)-d(x,y)|\leq d(ux,x)+d(vy,y)\leq\rho(u, 1_G)+\rho(v, 1_G),
  \end{equation*}
showing that $d$ is right uniformly continuous. Observe that $\rho$ may not separate points and that is why we obtain a pseudometric rather than a metric.

  We note that as $g \in V_0 \setminus V_1$, we have that $\rho(1_G, g) = 1$ and thus $1/2 \leq d(1_G, g) \leq 1$. Moreover, $\set{x \in G : d(1_G, x) < 4} \sub V_{-2}$ is relatively compact. Finally, if $x\notin V_1$, we have $\rho(1_G, x)\geq 1$ and thus $d(1_G, x)\geq 1/2$, proving that $\set{x \in G : d(1_G, x) < 1/2} \sub V_1\sub U$.
\end{proof}
\begin{remark}
  \label{rem:Struble}
  If $G$ is second countable, then by a result of Struble~\cite{Struble1974}, there always exists a proper, right-invariant metric on $G$ and in that case, one can use this metric instead of the pseudometric provided by Lemma~\ref{l:pseudometric} in what follows (with small modifications of the proof).
\end{remark}

Given a closed subgroup $H \leq G$, we equip the homogeneous space $G/H$ with  the quotient of the right uniformity of $G$. Explicitly, its  entourages are
\begin{equation*}
  \cU_V = \set{(g_1H, g_2H) : \exists v \in V \ vg_1H = g_2H},
\end{equation*}
where $V$ varies over symmetric neighborhoods of $1_G$.  If $d$ is a right-invariant,  continuous pseudometric on $G$, define $d_H$ on $G/H$ by
\begin{equation}
  \label{eq:dH}
  d_H(g_1H, g_2H) = \inf_{h \in H} d(g_1h, g_2).
\end{equation}
Note that by right invariance, $d_H$ is a pseudometric on $G/H$. Moreover, for every $g\in G$, we have $d_H(gx H, xH) \leq d(g, 1_G)$ which implies that $d_H$ is uniformly continuous.

Given $g\in G$ and $V \ni 1_G$, we denote
\begin{equation*}
  \Mov_g^V(G/H) = \set{xH \in G/H : gxH \notin VxH}.
\end{equation*}

The idea of the proof of the following lemma is adapted from the proof of Veech's theorem by Kechris, Pestov, and Todor\v{c}evi\'{c} \cite{Kechris2005}*{Appendix~A}.
\begin{lemma} \label{l:function}
 Let $g\in G$ and $V \ni 1_G$ be open. Let $H\le G$ be a closed subgroup. Then there exists $n\in \N$ and a uniformly continuous function $F \colon G/H \to \R^n$ with $\nm{F}_\infty \leq 8$ such that
\begin{equation*}
  \nm{F(gxH) - F(xH)}_\infty \geq 1/4 \quad \text{ for every } xH \in \Mov_g^V(G/H). 
\end{equation*}
Moreover, the dimension $n$ of the target $\R^n$ can be chosen to depend only on $g$ and $V$ but not on $H$.
\end{lemma}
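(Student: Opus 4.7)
The plan is to build $F$ as a tuple of Lipschitz bump functions on $(G/H, d_H)$ supported on a suitably colored separated net, where $d_H$ is the quotient of the pseudometric provided by Lemma~\ref{l:pseudometric}.

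First, apply Lemma~\ref{l:pseudometric} to $g$ and $V$ to obtain a right-invariant uniformly continuous pseudometric $d$ on $G$ with $\{x : d(1_G, x) < 1/2\} \subseteq V$ and such that $B_d(1_G, 4)$ is relatively compact; form the quotient pseudometric $d_H$ on $G/H$ via~\eqref{eq:dH}. The key initial estimate is that $d_H(gxH, xH) \geq 1/2$ for every $xH \in \Mov_g^V(G/H)$: if $d(gxh, x) < 1/2$ for some $h \in H$, then by right-invariance $d(gxhx^{-1}, 1_G) < 1/2$, so $gxhx^{-1} \in V$ by property (iv) of Lemma~\ref{l:pseudometric}, whence $gxH = (gxhx^{-1}) x H \in VxH$, contradicting $xH \in \Mov_g^V$.

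Next, choose a maximal $1/32$-separated subset $S \subseteq G/H$ in $d_H$; by maximality it is automatically a $1/32$-net. The ball $B_{d_H}(yH, 2)$ is the image under $G \to G/H$ of the relatively compact set $B_d(1_G, 2) \cdot y$, so it admits a covering by $N$ balls of $d_H$-radius $1/100$, where $N$ is the $1/100$-covering number of $B_d(1_G, 2) \subseteq G$---a quantity depending only on $g$ and $V$. Since any two distinct elements of $S$ are at $d_H$-distance at least $1/32 > 2/100$, each such small ball contains at most one element of $S$, so $|S \cap B_{d_H}(yH, 2)| \leq N$. Consequently, the graph on $S$ with edges $\{(s, s') : 0 < d_H(s, s') < 2\}$ has maximum degree at most $N - 1$ and admits a proper coloring by $n \leq N$ colors $S_1, \ldots, S_n$. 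For each color $c$, set
\begin{equation*}
\phi_c(xH) = \sum_{s \in S_c} [1 - 8 \, d_H(xH, s)]_+.
\end{equation*}
Since $S_c$ is $2$-separated and each summand has $d_H$-support of radius $1/8$, the bumps are pairwise disjoint, so $\phi_c \colon G/H \to [0, 1]$ is well-defined and $8$-Lipschitz in $d_H$, hence uniformly continuous. Put $F = (\phi_1, \ldots, \phi_n)$.

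Finally, for $xH \in \Mov_g^V$ pick $s \in S$ with $d_H(xH, s) < 1/32$ and let $c$ be its color; then $\phi_c(xH) > 3/4$. On the other hand, $\phi_c(gxH) = 0$: the summand at $s$ vanishes because $d_H(gxH, s) \geq 1/2 - 1/32 > 1/8$, and for any other $s' \in S_c$ the estimate $d_H(s, s') \geq 2$ combined with $d_H(s, gxH) \leq 1/32 + 1$ yields $d_H(gxH, s') \geq 2 - 1/32 - 1 > 1/8$. Thus $\|F(gxH) - F(xH)\|_\infty > 3/4 \geq 1/4$. The main difficulty is to control the dimension $n$ uniformly in $H$; this is resolved by the covering-number bound above, since $d_H$-balls in $G/H$ are projections of $d$-balls in $G$, so the local density of the net and the chromatic number of the relevant graph are bounded in terms of data depending only on $g$ and $V$.
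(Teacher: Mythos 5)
Your proof is correct and follows essentially the same route as the paper's: the pseudometric of Lemma~\ref{l:pseudometric}, the lower bound $d_H(gxH,xH)\geq 1/2$ on $\Mov_g^V$, a maximal separated net in $(G/H,d_H)$, and a proper coloring of a proximity graph whose degree is bounded uniformly in $H$ via relative compactness of $d$-balls in $G$. The only (cosmetic) difference is that you use disjointly supported bump functions $[1-8\,d_H(\cdot,s)]_+$ on each color class where the paper uses the distance functions $d_H(\cdot,A_i)$; both yield the required separation.
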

\begin{proof}
  Choose a pseudometric $d$ as in Lemma~\ref{l:pseudometric} (with $U = V$). Define $d_H$ as in \eqref{eq:dH}. Using Zorn's lemma, choose a subset $A \sub G/H$ which is maximal with the property
\begin{equation*}
  aH, bH \in A \And aH \neq bH \implies d_H(aH, bH) \geq 1/8.
\end{equation*}
Define a graph $\Gamma$ with vertex set $A$ where $aH$ and $bH$ are connected by an edge if and only if $d_H(aH, bH) < 3$.

We claim that $\Gamma$ has bounded degree and that the bound on the degree does not depend on $H$. To see this, let $b_1H, \ldots, b_nH$ be distinct neighbors of $aH$. This means that there exist $h_1,\ldots, h_n \in H$ such that $d(a, b_ih_i) < 3$ for every $i = 1, \ldots, n$. Furthermore, by the definition of $A$, we have $d(b_ih_i, b_jh_j) \geq 1/8$ for every $i \neq j$. Since $d$ is right-invariant, this implies that the elements $x_i = b_ih_ia^{-1}$ lie in the ball of radius $3$ around $1_G$ and have distance at least $1/8$ between each other. It follows that their cardinality does not exceed the size $\ell$ of a finite cover by balls of radius $1/16$ of the ball of radius $3$ (which is relatively compact by Lemma~\ref{l:pseudometric}). Therefore $\Gamma$ has degree bounded by $\ell$.

It follows that $\Gamma$ can be colored with at most $n = \ell+1$ colors in such a way that no two adjacent vertices have the same color. Let $A = A_1 \sqcup \cdots \sqcup A_n$ be the resulting partition of the vertices. For every $i = 1, \ldots, n$, let $f_i \colon G/H \to \R$ be given by $f_i(xH) = d_H(xH, A_i)$. Set $F = (f_1, \ldots, f_n)$. 

Consider $xH \in \Mov_g^V(G/H)$ and note that this condition together with \ref{l:pseudometric:i:moves} in Lemma~\ref{l:pseudometric} implies that $d_H(xH, gxH) \geq 1/2$. By the definition of $A$, there exists a point $aH \in A$ such that $d_H(xH, aH) < 1/8$. Let $i$ be such that $aH\in A_i$. Then $f_i(xH) < 1/8$.

Next we examine $f_i(gxH)$. Observe that
\begin{equation*}
  \begin{split}
    d_H(gxH, aH) &\leq d_H(gxH, xH) + d_H(xH, aH) \\
      &\leq d(gx, x) + 1/8 \leq 9/8.
  \end{split}
\end{equation*}
We claim that $aH$ is the closest point in $A_i$ to $gxH$. Indeed, if another point in $A_i$ were closer to $xH$, it would have to lie at a distance less than $18/8$ from $aH$, which is not possible because two points in $A_i$ lie at distance at least $3$. Therefore
\begin{equation*}
  \begin{split}
    f_i(gxH) &= d_H(gxH, aH) \\
      &\geq d_H(gxH, xH) - d_H(xH, aH) \\
      &\geq 1/2 - 1/8 = 3/8.     
  \end{split}
\end{equation*}
We deduce that
\begin{equation*}
  \nm{F(gxH)- F(xH)}_\infty \geq |f_i(gxH) - f_i(xH)| \geq 3/8 - 1/8 = 1/4.
  \qedhere
\end{equation*}
\end{proof}

We are now ready to prove Theorem~\ref{t:Chabauty}. Let $\cH \sub \Sub(G)$ be a closed invariant subset. Let $A \sub \cH$ be such that the union of the orbits of elements of $A$ is dense in $\cH$. Let $Z = \sqcup_{H\in A} G/H$, endowed with the disjoint union topology and uniform structure.  For $g\in G$ and $V\ni 1_G$ open, we denote
\begin{equation*}
  \Mov_g^V(Z)=\{z\in Z : gz\notin Vz\} = \sqcup_{H\in A} \Mov_g^V(G/H).
\end{equation*}
As a consequence of the last sentence in Lemma~\ref{l:function} (stating that the dimension $n$ of the codomain of $F$ is uniform in $H$), if one is given $g$ and $V$, the functions $F$ obtained in Lemma~\ref{l:function} can be coalesced together to obtain a uniformly continuous function on $Z$, and therefore Lemma~\ref{l:function} remains valid for the uniform space $Z$. We record this in the next lemma. 
\begin{lemma}
  \label{l:functionZ}
  Let $g \in G$ and $V \ni 1_G$ be open. Then there exists $n \in \N$ and a bounded, uniformly continuous function $F \colon Z \to \R^n$ such that
  \begin{equation*}
    \nm{F(gz)-F(z)}_\infty \geq 1/4  \quad \text{ for every } z \in \Mov_g^V(Z).
  \end{equation*} 
\end{lemma}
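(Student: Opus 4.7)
The plan is to directly glue together the functions provided by Lemma~\ref{l:function}, one for each $H \in A$, into a single function on the disjoint union $Z$. The key observation, which is precisely the content of the last sentence of Lemma~\ref{l:function}, is that the dimension $n$ of the codomain depends only on $g$ and $V$; so I can use the \emph{same} $n$ for every $H \in A$. Concretely, I would fix $g$ and $V$, apply Lemma~\ref{l:function} to obtain, for each $H \in A$, a uniformly continuous function $F_H \colon G/H \to \R^n$ with $\nm{F_H}_\infty \le 8$ and $\nm{F_H(gxH) - F_H(xH)}_\infty \ge 1/4$ on $\Mov_g^V(G/H)$, and then define $F \colon Z \to \R^n$ by $F|_{G/H} = F_H$. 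Boundedness is immediate since $\nm{F}_\infty \le 8$, and the moving property at $z \in \Mov_g^V(Z) = \sqcup_H \Mov_g^V(G/H)$ follows since $z$ and $gz$ both belong to the same coset space $G/H$.

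The only slightly delicate point is uniform continuity with respect to the disjoint-union uniformity on $Z$: an entourage of $Z$ is any set whose restriction to each piece $G/H \times G/H$ is an entourage of $G/H$, so uniform continuity of $F$ requires a modulus that works for \emph{all} $H$ simultaneously. Here I would revisit the construction in the proof of Lemma~\ref{l:function} and note that the coordinate functions $f_i$ are $1$-Lipschitz with respect to $d_H$, which is itself constructed from a single fixed right-invariant, uniformly continuous pseudometric $d$ on $G$ (the pseudometric depends on $g$ and $V$ but not on $H$). Since $d_H(vxH,xH) \le d(v,1_G)$ for all $v \in G$, this yields the uniform bound
\begin{equation*}
  \nm{F_H(vxH) - F_H(xH)}_\infty \le d(v, 1_G)
\end{equation*}
valid for every $H \in A$ and every $x \in G$. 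Hence, given $\eps > 0$, the symmetric neighborhood $V_\eps = \set{v \in G : d(v,1_G) < \eps}$ of $1_G$ provides an entourage $\cU_{V_\eps}$ of the right uniformity of $G$ whose image in each $G/H$ controls $F_H$ uniformly in $H$, showing that $F$ is uniformly continuous on $Z$.

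The main (and essentially only) obstacle is therefore to verify this uniform-in-$H$ modulus of continuity; once that is noted, the rest of the proof is bookkeeping. I would then conclude by explicitly writing out the three required properties—boundedness, uniform continuity, and the displacement estimate on $\Mov_g^V(Z)$—each of which is now immediate from the construction.
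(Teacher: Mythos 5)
Your proof is correct and follows essentially the same route as the paper, whose entire argument is the one-sentence observation that the functions from Lemma~\ref{l:function} can be coalesced because the dimension $n$ of the codomain is uniform in $H$. Your additional verification that the glued function has a modulus of continuity uniform in $H$ (via the $1$-Lipschitzness of the coordinate functions with respect to $d_H$ and the bound $d_H(vxH,xH)\le d(v,1_G)$, with $d$ depending only on $g$ and $V$) simply makes explicit the detail the paper leaves implicit.
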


Let $\Cub(Z)$ be the commutative $C^*$-algebra of bounded, uniformly continuous functions on $Z$ and let $S(Z)$ be its Gelfand spectrum (this is often called the \df{Samuel compactification of the uniform space $Z$}).
\begin{prop}
The $G$-space $X=S(Z)$ verifies the conclusion of Theorem~\ref{t:Chabauty}.
\end{prop}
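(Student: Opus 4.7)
My plan is to verify three things: (i) the $G$-action on $Z$ extends to a jointly continuous action on $S(Z)$, (ii) the stabilizer map $\stab\colon S(Z)\to \Sub(G)$ is continuous, and (iii) its image equals $\cH$. For (i), I first note that for each $g\in G$ the map $z\mapsto gz$ is uniformly continuous on $Z$: if $(z_1,z_2)\in \mathcal{U}_V$ then $(gz_1,gz_2)\in \mathcal{U}_{gVg^{-1}}$, and $gVg^{-1}$ can be made arbitrarily small by shrinking $V$. Hence $g$ induces a homeomorphism of $S(Z)$. Joint continuity $G\times S(Z)\to S(Z)$ then follows from the norm continuity of $g\mapsto g\cdot f$ on $\Cub(Z)$ for each $f$: if $|f(z_1)-f(z_2)|<\varepsilon$ whenever $(z_1,z_2)\in\mathcal{U}_V$, then for every $g\in V$ and every $z\in Z$ the pair $(g^{-1}z,z)$ lies in $\mathcal{U}_V$ (with $v=g$), so $\|g\cdot f - f\|_\infty\leq\varepsilon$.

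For (ii), upper semi-continuity of $\stab$ is automatic from continuity of the action, so all the work is in lower semi-continuity. I fix $\omega\in S(Z)$, $g\in\stab(\omega)$, and a symmetric open neighborhood $V$ of $1_G$, and seek a neighborhood $N$ of $\omega$ such that $\stab(\omega')\cap Vg\neq\varnothing$ for every $\omega'\in N$. The plan is to pick an open $W\ni 1_G$ with $\overline W$ compact and $\overline W\subseteq V$, apply Lemma~\ref{l:functionZ} to $(g,W)$ to obtain a bounded uniformly continuous $F\colon Z\to\R^n$ with $\|F(gz)-F(z)\|_\infty\geq 1/4$ on $\Mov_g^W(Z)$, and extend $F$ continuously to $S(Z)$. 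The closed set $\{\omega'\in S(Z):\|F(g\omega')-F(\omega')\|_\infty\geq 1/4\}$ then contains $\overline{\Mov_g^W(Z)}$ but not $\omega$ (since $g\omega=\omega$ forces $F(g\omega)=F(\omega)$), so the candidate is $N:=S(Z)\setminus\overline{\Mov_g^W(Z)}$. To verify that $N$ works, I pick $\omega'\in N$ and a net $(z_i)\subset N\cap Z$ with $z_i\to\omega'$; by construction $z_i\notin\Mov_g^W(Z)$, so $gz_i=v_iz_i$ with $v_i\in W$, and compactness of $\overline W$ lets me pass to a subnet with $v_i\to v\in\overline W\subseteq V$; joint continuity then yields $g\omega'=v\omega'$, i.e.\ $v^{-1}g\in\stab(\omega')\cap Vg$, as required.

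For (iii), $\stab(Z)$ equals the set of all $G$-conjugates of subgroups in $A$, which by hypothesis is a dense subset of $\cH$, so $\overline{\stab(Z)}=\cH$; since $\stab$ is continuous and $Z$ is dense in $S(Z)$, we obtain $\stab(S(Z))=\overline{\stab(Z)}=\cH$. I expect the main obstacle to be step (ii): Lemma~\ref{l:functionZ} provides only \emph{qualitative} separation (distinct $F$-values at $gz$ and $z$) between $g$-fixed and $g$-moved points, whereas lower semi-continuity demands an actual nearby element $v^{-1}g\in\stab(\omega')$; the key device is to work with $W$ strictly inside $V$ so that the compactness of $\overline W$ converts the $Z$-level witnesses $v_i$ into a limit that still lies in $V$.
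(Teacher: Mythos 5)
Your proof is correct and follows essentially the same route as the paper: both hinge on Lemma~\ref{l:functionZ}, used to show that a point of $S(Z)$ fixed by $g$ cannot lie in $\overline{\Mov_g^W(Z)}$, so that nearby points of $Z$ are moved by $g$ only within $W$. The paper packages this as a contradiction (a Chabauty cluster point $K \lneq G_\omega$ of the stabilizers $G_{z_i}$ would force $z_i \in \Mov_g^V(Z)$ and hence $g\omega \neq \omega$), whereas you prove lower semicontinuity directly by extracting the witness $v \in \overline{W}$ by compactness; together with your explicit verification that the action extends continuously to $S(Z)$, this is a slightly more self-contained but equivalent argument.
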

\begin{proof}
  Since $Z$ is dense in $X$, it is enough to prove that for every $\omega \in X$ and every net $(z_i)_i \sub Z$ converging to $\omega$, the stabilizers $G_{z_i}$ converge to $G_\omega$. Fix $\omega \in X$ and a net $(z_i) \sub Z$ with $z_i \to \omega$. Let $K$ be a cluster point of $G_{z_i}$ and let us show that $K=G_\omega$.We may assume that $G_{z_i}$ converges to $K$. We have $K \leq G_\omega$ by upper semicontinuity of the stabiliser map.

  Towards a contradiction, suppose that the inclusion is strict and let $g \in G_\omega \setminus K$. Let $V \ni 1_G$ be a compact, symmetric neighborhood of $1_G$ such that $Vg \cap K = \emptyset$. This condition defines an open neighbourhood of $K$ in the Chabauty topology, so $g V \cap G_{z_i}=\varnothing$ for $i$ large enough. Equivalently (using that $V$ is symmetric) $z_i \in \Mov_g^V(Z)$. By Lemma~\ref{l:functionZ}, we can find a function $F \colon Z \to \R^n$ with the property that $\nm{F(gz_i) - F(z_i)}_\infty \geq 1/4$ for all $i$ large enough. Since $F$ extends to $X$, passing to the limit, we get $\nm{F(g \omega) - F(\omega)}_\infty \geq 1/4$. In particular, $g\omega \neq \omega$, contradicting the fact that $g \in G_\omega$. Therefore $K = G_\omega$ and the stabilizer map is continuous as claimed.

That the image of $\stab$ is equal to $\cH$ now follows from the fact that $\stab(Z)$ is a dense subset of $\cH$. 
\end{proof}

It remains to prove the claim of the last sentence in the statement of Theorem~\ref{t:Chabauty}.
\begin{prop}
  \label{p:metrizable}
 Let $X$ be the $G$-space constructed above. If $G$ is second countable, then there exists a metrizable quotient $Y$ of $X$ such that $\stab$ is continuous on $Y$ and $\stab(Y) = \cH$.
\end{prop}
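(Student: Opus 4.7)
The plan is to realize $Y$ as the Gelfand spectrum of a separable, $G$-invariant, unital $C^*$-subalgebra $\mathcal{A}$ of $\Cub(Z) = C(X)$: such a $Y$ is automatically a compact metrizable $G$-space, and the inclusion $\mathcal{A} \hookrightarrow C(X)$ dualizes to a continuous $G$-equivariant surjection $q \colon X \to Y$. I will choose $\mathcal{A}$ so that $q$ is injective on every $G$-orbit of $X$. Once this is done, for every $x \in X$ and $y = q(x)$ we have $\stab_Y(y) = \stab_X(x)$: indeed $g y = y$ is equivalent to $q(gx) = q(x)$, and since $gx$ lies in the orbit of $x$, the injectivity of $q$ on that orbit forces $g x = x$. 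Hence $\stab_X$ is constant on fibers of $q$ and factors as $\stab_X = \stab_Y \circ q$. Since $\stab_X$ is continuous (by the previous proposition) and $q$ is a topological quotient between compact Hausdorff spaces, $\stab_Y$ is continuous, with image $\stab_Y(Y) = \stab_X(X) = \cH$.

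\textbf{Construction.} Using second countability of $G$, fix a countable dense subset $D \sub G$, a countable neighborhood basis $\{V_n\}$ of $1_G$ consisting of open, symmetric, relatively compact sets, and a countable dense subset $A \sub \cH$ (so that $Z = \sqcup_{H \in A} G/H$ is a separable uniform space). By Struble's theorem (Remark~\ref{rem:Struble}) there is a proper right-invariant metric $d$ on $G$; I would run all the constructions of Lemmas~\ref{l:pseudometric}--\ref{l:functionZ} using this fixed $d$ throughout, at the cost of the minor modifications indicated in Remark~\ref{rem:Struble}. The crucial feature of this choice is that every resulting function $F_{g, V}$ is $1$-Lipschitz with respect to the same pseudometric on $Z$ induced from $d$, uniformly in the parameters $(g, V)$. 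Let $\mathcal{A}$ be the closed, $G$-invariant, unital $C^*$-subalgebra of $\Cub(Z)$ generated by the components of the countably many functions $F_{g, V_n}$ for $(g, V_n) \in D \times \{V_n\}$. Separability of $\mathcal{A}$ is automatic: the $G$-action on $\Cub(Z)$ is norm-continuous on uniformly continuous functions (since it preserves the right uniformity of $Z$), so by second countability of $G$ each $G$-orbit of a generator is separable, and hence so is the closed $*$-algebra they generate.

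\textbf{Key claim and main obstacle.} It remains to verify the claim that whenever $g x \neq x$ for some $x \in X$ and $g \in G$, some $F \in \mathcal{A}$ satisfies $F(gx) \neq F(x)$. The argument parallels the proof of continuity of $\stab_X$ in the previous proposition, except that only the countable family $\{F_{g', V_n} : g' \in D, n \in \N\}$ is available. Continuity of $\stab_X$ gives a compact symmetric neighborhood $V \ni 1_G$ with $Vg \cap \stab_X(x) = \varnothing$, a neighborhood of $x$ in $X$ on which the same holds, and hence a net $z_i \in Z$ in that neighborhood with $z_i \to x$, so that $z_i \in \Mov_g^V(Z)$. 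I then choose $V_n$ with $V_n^2 \sub V$ and, by density of $D$, some $g' \in D$ with $g g'^{-1} \in V_n$ and $d(g, g') < 1/8$; a short computation (using $g z = g g'^{-1} (g' z)$) shows $\Mov_g^V(Z) \sub \Mov_{g'}^{V_n}(Z)$, so Lemma~\ref{l:functionZ} applied to $(g', V_n)$ gives $\nm{F_{g', V_n}(g' z_i) - F_{g', V_n}(z_i)}_\infty \geq 1/4$ and, passing to the limit $z_i \to x$ in $X$, $\nm{F_{g', V_n}(g' x) - F_{g', V_n}(x)}_\infty \geq 1/4$. Finally, the uniform $1$-Lipschitz estimate yields $\nm{F_{g', V_n}(gx) - F_{g', V_n}(g' x)}_\infty \leq d(g, g') < 1/8$, and the triangle inequality produces $\nm{F_{g', V_n}(gx) - F_{g', V_n}(x)}_\infty > 1/8$, proving the claim. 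This last Lipschitz step is the main technical point, and precisely the reason for using Struble's metric in place of the $(g', V_n)$-dependent pseudometrics of Lemma~\ref{l:pseudometric}: a uniform Lipschitz constant across the whole countable family is essential for the approximation of an arbitrary $g \in G$ by some $g' \in D$.
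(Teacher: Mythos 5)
Your overall architecture coincides with the paper's: both realize $Y$ as the Gelfand spectrum of a separable, $G$-invariant, closed subalgebra $\cA \sub \Cub(Z)$, and both reduce the problem to producing a countable family of functions for which a (possibly weakened) form of the conclusion of Lemma~\ref{l:functionZ} holds; your reduction via orbit-injectivity of the quotient map $q$ is a fine alternative to the paper's direct rerun of the continuity argument on $Y$. However, there is a genuine inconsistency in your ``crucial feature''. The functions of Lemma~\ref{l:function} satisfy $\|F(gz)-F(z)\|_\infty \ge 1/4$ on $\Mov_g^V(Z)$ only because the pseudometric of Lemma~\ref{l:pseudometric} is normalized relative to $(g,V)$: the conditions $1/2 \le d(1_G,g)\le 1$ and $\set{x : d(1_G,x)<1/2}\sub V$ force one to rescale Struble's metric by a factor of order $1/r_V$, where $r_V=\inf\set{d(1_G,x) : x\notin V}$, and the resulting $F_{g,V}$ is then only $O(1/r_V)$-Lipschitz for the \emph{fixed} metric. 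You cannot have both the uniform lower bound $1/4$ and a uniform Lipschitz constant: for $G=\R$ with the usual metric, $V=(-\eps,\eps)$ and $g=2\eps$, every $z$ lies in $\Mov_g^V$ yet $d(gz,z)=2\eps$, so any $1$-Lipschitz $F$ gives $\|F(gz)-F(z)\|\le 2\eps<1/4$ for small $\eps$. Consequently your final triangle-inequality step ($\ge 1/4-1/8$) does not go through as written.

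The gap is repairable within your scheme: keep the unrescaled Struble metric (so that all $F_{g',V_n}$ are genuinely $1$-Lipschitz), accept that the lower bound on $\Mov_{g'}^{V_n}(Z)$ is only of order $r_{V_n}$ rather than $1/4$, and require $d(g,g')<r_{V_n}/4$ when approximating $g$ by $g'\in D$; since you only need \emph{some} $F\in\cA$ with $F(gx)\neq F(x)$, a $V_n$-dependent separation constant is harmless. It is worth noting that the paper avoids the issue altogether and needs no global metric: for each fixed $g$ it uses the uniform continuity of the single function $F_{g,V}$ (with its own modulus) to produce an open set $U\ni g$ on which that same $F$ still separates by $1/8$, and then invokes the Lindel\"of property of $G$ to extract a countable subfamily. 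That route never has to compare moduli of continuity across different group elements, which is precisely where your argument runs into trouble.
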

\begin{proof}
Fix a countable basis $\cB$ at $1_G$. Let $Z = \sqcup_{H \in A} G/H$ as before. We will define the quotient $Y$ as the Gelfand space of a separable $G$-invariant subalgebra $\cA$ of $\Cub(Z)$. Note that for the $\stab$ map to be continuous on $Y$, we only need that the conclusion of Lemma~\ref{l:functionZ} holds for $\cA$, i.e.,
  \begin{equation*}
    \forall V \in \cB \ \forall g \in G \ \exists F \in \cA \quad \nm{F(gz) - F(z)}_\infty > 1/8 \quad \text{ for all } z \in \Mov_g^{V}(Z),
  \end{equation*}
  that is, the function $F = (f_1, \ldots, f_n) \colon Z \to \R^n $ can be chosen in such a way that $f_1, \ldots, f_n \in \cA$. Thus all we need to show is that for a fixed $V \in \cB$, there is a countable collection $\cA_V$ of functions $F \colon Z \to \R^n$ such that
  \begin{equation*}
    \forall g \in G \exists F \in \cA_V \ \forall z \in Z \quad gz \in Vz \quad\text{or}\quad \nm{F(gz) - F(z)}_\infty > 1/8.
  \end{equation*}
  Provided that this is done, we can take $\cA$ to be the smallest $G$-invariant, closed subalgebra that contains $\bigcup_{V \in \cB} \cA_V$, which is separable.
  
  Lemma~\ref{l:functionZ} and uniform continuity imply that for every $g \in G$, there exist $F \colon Z \to \R^n$ and an open $U \ni g$ such that 
  \begin{equation*}
    \forall g' \in U \ \forall z \in Z \quad g'z \in Vz \quad\text{or}\quad \nm{F(g'z) - F(z)}_\infty > 1/8.
  \end{equation*}
  Now the fact that $G$ is Lindelöf implies that we can find a countable collection of functions $F$ that works for all $g$.
\end{proof}

%%%%%%%%%%%%%%%%%%%%%%%%%%%%%%%%%%%%%%%%%%%%%%%%%%

\section{Universal minimal flow relative to a URS}

\subsection{Existence and uniqueness}
% Recall that a \df{joining} of two systems $X$ and $Y$ is a closed, invariant subset of the product $X \times Y$ that projects onto both $X$ and $Y$.
If $\cH$ and $\cK$ are URSs of $G$, we write $\cH \preceq \cK$ if for all $H \in \cH$, there exists $K \in \cK$ such that  $H$ is contained in $K$. This relation is  a partial order on the set of URSs of $G$ (see \cite[Corollary 2.15]{LeBoudec2016p}; the proof given there for countable groups extends easily to locally compact groups), however we shall not use this fact.
%As the condition $H \leq K$ is closed in $\Sub(G)^2$, this is equivalent to the existence of a joining $Z \sub \cH \times \cK$ such that for all $(H, K) \in Z$, $H \leq K$.

\begin{defn}
  \label{df:domination}
  Let $G$ be a locally compact group, $G \actson X$ be a minimal action on a compact space $X$, and let $\cH$ be a URS of $G$. We will say that $X$ \df{is subordinate to} $\cH$ if $\cH \preceq \cS_G(X)$.
\end{defn}
Since $\cS_G(X)$ is in general  different from the collection of stabilizers of $G\actson X$, we make the following observation. 
\begin{lemma} \label{l:stab-contains-urs}
Let $G\actson X$ be a minimal action on a compact space which is subordinate to a URS $\cH$. Then every $H\in \cH$ fixes a point $x\in X$. 
\end{lemma}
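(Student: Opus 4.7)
The plan is to combine the definition of subordination with the two defining properties of the stabilizer URS: its inclusion in the closure of the stabilizer image and the upper semi-continuity of the stabilizer map.

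First I would unpack what needs to be shown. Fix $H \in \cH$. By the hypothesis $\cH \preceq \cS_G(X)$, there exists $K \in \cS_G(X)$ with $H \leq K$. It is then enough to exhibit some $x \in X$ with $K \leq \stab(x)$, since then $H \leq K \leq \stab(x)$ gives the desired fixed point.

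Next, since $\cS_G(X)$ was defined as the unique URS contained in $\overline{\stab(X)}$ (where the closure is taken in the Chabauty topology on $\Sub(G)$), the subgroup $K$ lies in $\overline{\stab(X)}$. Hence there is a net $(x_i)$ in $X$ such that $\stab(x_i) \to K$ in $\Sub(G)$. By compactness of $X$, we may pass to a subnet and assume moreover that $x_i \to x$ for some $x \in X$. Upper semi-continuity of the stabilizer map, recalled in the introduction, then yields $K \leq \stab(x)$, completing the argument.

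The proof is straightforward once one recalls these two properties; there is no serious obstacle. The only subtle point, worth flagging in the write-up, is that minimality of $X$ is used only to make sense of $\cS_G(X)$ (it is in the standing hypothesis of Definition~\ref{df:domination}), and that it is essential to choose $K$ as a Chabauty limit of \emph{stabilizers of actual points of $X$}, rather than of an arbitrary approximating net in $\cS_G(X)$, so that one can then leverage compactness of $X$ and upper semi-continuity of $\stab$ to produce the fixed point.
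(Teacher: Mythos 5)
Your proof is correct and follows essentially the same route as the paper: the paper's version opens with ``we may assume $\cH = \cS_G(X)$'' and then runs exactly your argument (a net $x_i$ with $G_{x_i} \to K$, compactness to extract a convergent subnet, and upper semi-continuity of $\stab$). Your write-up just makes the reduction to $K \in \cS_G(X)$ and the inclusion $\cS_G(X) \sub \overline{\stab(X)}$ more explicit.
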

\begin{proof}
We may assume $\cH=\cS_G(X)$. Since $\cS_G(X)$ is contained in the closure of point stabilizers, for every $H\in \cH$ there exists a net $x_i$ such that $G_{x_i}\to H$. By compactness we may assume that $x_i$ converges to some point $x$, and the conclusion follows from the upper semicontinuity of the stabilizer map.
\end{proof}

% \begin{prop}
%   \label{p:domination}
%   Let $\cH$ be a URS of $G$ and $G \actson X$ be a minimal system. Then the following are equivalent:
%   \begin{enumerate}
%   \item $X$ is subordinate to $\cH$;
%   \item \label{i:p:domination:2} There exists a joining $Z$ of $X$ and $\cH$ such that for all $(x, H) \in Z$, $H \leq G_x$.
%   \end{enumerate}
% \end{prop}
% \begin{proof}
%   \begin{cycprf}
%     \item[\impnext] Suppose that $\cH \preceq \cS(X)$. Let $Y$ be a joining of $\cH$ and $\cS_G(X)$ such that for all $(H, K) \in Y$, $H \leq K$. Let $\tilde X$ be a joining of $\cS_G(X)$ and $X$ as in \cite{Glasner2015}*{Proposition~1.3}. Let $Z'$ be the joining of $Y$ and $\tilde X$ over $\cS_G(X)$:
%     \begin{equation*}
%       Z' = \set{\big((H, K_1), (K_2, x)\big) \in Y \times \tilde X : K_1 = K_2},
%     \end{equation*}
%     and finally, let $Z$ be the projection of $Z'$ to $\cH \times X$. Then it is easy to check that $Z$ is as desired.

%     \item[\impfirst] Let $Z$ be a joining of $X$ and $\cH$ as in \ref{i:p:domination:2}. Let $x_0 \in X$ be such that $G_{x_0} \in \cS_G(X)$ and let $(x_0, H_0) \in Z$. Then $H_0 \leq  G_{x_0}$, which, by ???, implies that $\cH \preceq \cS(X)$.
%   \end{cycprf}
% \end{proof}

Recall that given two compact $G$-spaces $X$ and $Y$, we say that $X$ \emph{factors onto} $Y$ if there exists a continuous, surjective, $G$-equivariant map $X\to Y$. Given a collection $\mathcal{E}$ of compact $G$-spaces, we say that $X\in \mathcal{E}$ is \emph{universal} for $\mathcal{E}$ if it factors onto all elements of $\mathcal{E}$. 

The goal of this section is to establish the following theorem.
\begin{theorem}\label{t-UMF}
For every URS $\cH$ of $G$, there exists a minimal $G$-space $M(G, \cH)$, unique up to isomorphism, which is subordinate to $\cH$ and is universal for minimal $G$-spaces subordinate to $\cH$. Moreover, the stabilizer map $M(G, \cH) \to \cH$ is continuous.
\end{theorem}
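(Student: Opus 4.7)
The plan is to realize $M(G,\cH)$ as a minimal subflow of a carefully constrained product. First, apply Corollary~\ref{c:URS:stab} to obtain a compact minimal $G$-space $X_{0}$ equipped with a continuous stabilizer map $\stab\colon X_{0}\to \cH$ surjecting onto $\cH$. Let $\{Y_\alpha\}_{\alpha\in\Lambda}$ be a set of representatives of the isomorphism classes of minimal compact $G$-spaces subordinate to $\cH$, which forms a set by standard cardinality bounds on compact $G$-flows. Consider
\[
  W := \Bigl\{\bigl(x,(y_\alpha)_{\alpha}\bigr)\in X_{0}\times \prod_\alpha Y_\alpha : \stab(x)\sub G_{y_\alpha}\text{ for every }\alpha\Bigr\}.
\]
Closedness of $W$ uses the continuity of $\stab$ on $X_{0}$, the upper semi-continuity of the stabilizer map on each $Y_\alpha$, and the fact that the inclusion relation on $\Sub(G)^{2}$ is closed in the Chabauty topology. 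Non-emptiness follows from Lemma~\ref{l:stab-contains-urs}: given any $x\in X_{0}$, the group $\stab(x)\in\cH$ fixes some point in each $Y_\alpha$ since $Y_\alpha$ is subordinate to $\cH$. Set $M(G,\cH)$ to be any minimal $G$-invariant subset of $W$.

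The crucial observation is that the defining constraint of $W$ forces the stabilizer of any point $(x,(y_\alpha))\in W$ to equal $\stab(x)$. Hence the stabilizer map on $M(G,\cH)$ is the composition $\stab\circ\pi_{X_{0}}$ of the projection onto the first factor with the continuous stabilizer map of $X_{0}$, which is continuous and has image $\cH$. In particular, $M(G,\cH)$ is subordinate to $\cH$ by Lemma~\ref{l:stab-contains-urs}. Moreover, for every $\alpha$ the projection $M(G,\cH)\to Y_\alpha$ has image a non-empty closed $G$-invariant subset of the minimal flow $Y_\alpha$, hence is surjective; so $M(G,\cH)$ factors onto every minimal compact $G$-space subordinate to $\cH$, establishing universality.

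For uniqueness, suppose $M$ and $M'$ are two minimal $G$-spaces subordinate to $\cH$ satisfying the universality property of the theorem. Universality provides factor maps $\phi\colon M\to M'$ and $\psi\colon M'\to M$, and we consider the $G$-equivariant continuous self-map $f=\psi\circ\phi$ of $M$. The plan is to apply the Ellis--Numakura lemma to the closure $S$ of the iterates $\{f^n : n\geq 1\}$ in $M^{M}$ with the pointwise-convergence topology: $S$ is a compact right-topological semigroup and hence contains an idempotent $e$. Since $e$ is a pointwise limit of iterates of $f$, it is $G$-equivariant. Exploiting minimality of $M$ (which forces $\overline{e(M)}=M$) together with the idempotent structure of $e$, one deduces, via a careful argument in the enveloping semigroup, that $f$ is invertible in the appropriate compact right-topological group of equivariant endomorphisms, so that $\phi$ admits a left inverse and is therefore a homeomorphism, giving $M\cong M'$.

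The principal obstacle in this plan is the uniqueness step. Existence and continuity of the stabilizer map flow directly from the constrained product construction once Corollary~\ref{c:URS:stab} is available. Uniqueness, by contrast, requires careful handling of the Ellis semigroup: the set of continuous $G$-equivariant self-maps of $M$ is not closed in $M^{M}$ under pointwise convergence, so one must work inside the enveloping semigroup and navigate the possible discontinuity of limits of iterates, together with the fact that the idempotent produced by Ellis--Numakura need not itself be continuous — this is where the main technical subtlety lies.
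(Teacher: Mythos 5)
Your existence argument is correct, and it takes a genuinely different route from the paper: you anchor the construction on the realization theorem (Corollary~\ref{c:URS:stab}) and cut out a closed subflow of a product over all isomorphism classes of minimal flows subordinate to $\cH$, whereas the paper takes a minimal subflow $M$ of the Samuel compactification $S(G/H)$ for a single $H\in\cH$ and gets universality from the extension of orbit maps $G/H\to X$ to $S(G/H)$. Your route buys a transparent proof that the stabilizer map on $M(G,\cH)$ is continuous with image exactly $\cH$; the paper's route buys something you end up needing and not having, namely an ambit $S(G/H)\supseteq M$ carrying enough semigroup structure to prove uniqueness. (Minor points in your existence step are fine: closedness of $W$ does follow from continuity of $\stab$ on $X_0$, upper semi-continuity on the $Y_\alpha$, and closedness of the inclusion relation in $\Sub(G)^2$; and the stabilizer of a point of $W$ is indeed the first coordinate's stabilizer.)

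The uniqueness step, however, has a genuine gap, and it is not merely a matter of filling in routine details. What you must prove is that $M$ is coalescent, i.e.\ that the continuous $G$-map $f=\psi\circ\phi$ is injective. Your plan --- take an idempotent $e$ in the closure $S=\overline{\set{f^n:n\ge 1}}\subseteq M^M$ and ``deduce that $f$ is invertible'' --- cannot succeed from minimality and Ellis--Numakura alone: there exist minimal flows admitting continuous, surjective, non-injective $G$-endomorphisms (non-coalescent minimal flows), and for such a flow every ingredient of your argument (compact right-topological $S$, a $G$-equivariant idempotent $e$ with $e(M)$ dense and fixed pointwise by $e$) is available, yet the conclusion is false. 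The obstruction is exactly the one you name: $e$ is not continuous, so ``$e$ fixes a dense set pointwise'' does not yield $e=\id$, and from $f(x)=f(y)$ one only gets $e(x)=e(y)$ without knowing $x,y\in e(M)$. Universality must therefore enter the proof in an essential way, and your sketch never uses it past the construction of $\phi$ and $\psi$. The paper resolves this by exploiting the ambient space: every $\omega\in\Fix_H(M)$ determines a \emph{continuous} $G$-map $r_\omega\colon S(G/H)\to M$, every continuous endomorphism of $M$ is of the form $r_\omega$ (Lemma~\ref{l:romega}), $\Fix_H(M)$ is a compact right-topological semigroup, and an idempotent in the left ideal $\Fix_H(M)\omega$ produces a continuous right inverse $r_\eta$ with $f\circ r_\eta=\id$; injectivity of $f$ then follows from surjectivity of $r_\eta$. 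Since your $M$ is not presented inside $S(G/H)$, you would either have to transport your model there (e.g.\ by first proving it is isomorphic to the paper's $M$, which is circular) or reconstruct an analogous semigroup of \emph{continuous} equivariant maps acting on your $M$; as written, the uniqueness claim is unproved.
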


\begin{defn}
The space $M(G, \cH)$ will be called the \df{universal minimal flow of $G$ relative to $\cH$}.
\end{defn}

If $\cH = \set[big]{\set{1_G}}$, then $M(G, \cH)$ is just the usual universal minimal flow of $G$.

The existence is easy. Let $H\in \cH$ be arbitrary and recall that $S(G/H)$ denotes the Samuel compactification of $G/H$. Let $M \sub S(G/H)$ be a minimal subset. Then $M$ verifies the universal property. Indeed, let $G \actson X$ be a minimal $G$ space subordinate to $\cH$. By Lemma \ref{l:stab-contains-urs} there exists a point $x \in X$ such that $H$ stabilizes $x$. The orbital map $G \to X, g \mapsto g \cdot x$ descends to a uniformly continuous map $G/H \to X$, which extends to a $G$-map $S(G/H)\to X$, and taking the restriction to $M$ shows that $M$ factors onto $X$. We have already proven that the stabilizer map is continuous and that the collection of stabilizers of $G\actson M$ is equal to $\cH$; in particular, $M$ is subordinate to $\cH$.

Our next goal is to check uniqueness. For this, it is enough to prove that $M$ is \df{coalescent}, i.e., that every continuous $G$-equivariant map $M\to M$  is a homeomorphism. For the usual (non-relative) universal minimal flow of $G$, this is a result of Ellis~\cite{Ellis1960}. Our proof is close to the exposition by Uspenskij~\cite{Uspenskij2000} of Ellis's theorem. In the classical case, the proof  is based on the fact that $S(G)$ carries a natural semigroup structure. The main difference is that in our case, $S(G/H)$ does not carry such a structure; however, we can find a semigroup inside $S(G/H)$ that is sufficient for our purposes.

Let $\Fix_H(M)$ be the set of points in $M$ fixed by $H$. Observe that for every $\omega \in \Fix_H(M)$, the orbital map $G/H \to G \cdot \omega$ extends to a continuous equivariant map $r_\omega \colon S(G/H) \to M$, which is moreover the unique $G$-map $S(G/H) \to S(G/H)$ sending $H$ to $\omega$. Hence, we get a map $S(G/H) \times \Fix_H(M)\to M$ continuous in the first variable. 

\begin{lemma} \label{l-semigroup}
For every $\omega \in \Fix_H(M)$, we have $r_\omega(\Fix_H(M)) \sub \Fix_H(M)$. In particular, $\Fix_H(M)$ is a right-topological semigroup under the operation $\Fix_H(M) \times \Fix_H(M) \to \Fix_H(M), (\eta, \omega) \mapsto \eta\omega:= r_\omega(\eta)$.
\end{lemma}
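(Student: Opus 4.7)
The plan is to exploit the uniqueness characterization of $r_\alpha$ throughout: since $G \cdot H$ is dense in $S(G/H)$, any continuous $G$-equivariant map $S(G/H) \to M$ is determined by its value at the coset $H$.

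For the first claim, I would use only the $G$-equivariance of $r_\omega$. If $\eta \in \Fix_H(M)$ and $h \in H$, then $h \cdot r_\omega(\eta) = r_\omega(h \cdot \eta) = r_\omega(\eta)$, so $r_\omega(\eta) \in \Fix_H(M)$ (it lies in $M$ since $r_\omega$ takes values there). This makes the operation $(\eta,\omega) \mapsto \eta\omega := r_\omega(\eta)$ internal to $\Fix_H(M)$. Right-topologicality is then automatic: for each fixed $\omega$, the map $\eta \mapsto \eta\omega$ is just $r_\omega$ restricted to $\Fix_H(M)$, which is continuous because $r_\omega$ is.

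The one substantive step, and where I expect the (modest) main obstacle to lie, is associativity. I would prove $(\eta\omega)\zeta = \eta(\omega\zeta)$ by upgrading it to the operator identity $r_\zeta \circ r_\omega = r_{\omega\zeta}$ of maps $S(G/H) \to M$; the composition is legitimate because $r_\omega$ lands in $M \sub S(G/H)$. Both sides are continuous and $G$-equivariant, and both send the coset $H$ to $\omega\zeta$, since $r_\zeta(r_\omega(H)) = r_\zeta(\omega) = \omega\zeta$ on the left, while $r_{\omega\zeta}(H) = \omega\zeta$ on the right by construction. Uniqueness then identifies the two maps, and evaluating this identity at an arbitrary $\eta \in \Fix_H(M)$ unpacks to $r_\zeta(r_\omega(\eta)) = r_{r_\zeta(\omega)}(\eta)$, that is, $(\eta\omega)\zeta = \eta(\omega\zeta)$.
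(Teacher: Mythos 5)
Your proposal is correct and matches the paper's approach: the paper's entire proof of the displayed claim is the one-line observation that $r_\omega$ is $G$-equivariant (so $h\cdot r_\omega(\eta)=r_\omega(h\cdot\eta)=r_\omega(\eta)$ for $h\in H$), exactly as you argue. Your verification of associativity via the uniqueness of a continuous $G$-map $S(G/H)\to M$ determined by its value at the coset $H$ is precisely the standard argument the paper leaves implicit, and it is carried out correctly.
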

\begin{proof}
This is obvious because the map $r_\omega$ is $G$-equivariant.
\end{proof}

Since $\Fix_H(M)$ is a compact, right-topological semigroup, by a well-known theorem of Ellis, $\Fix_H(M)$ contains idempotent elements. 

\begin{lemma}
  \label{l:idempotent-retraction} \label{l-retraction-idempotent}
Let $\iota \in \Fix_H(M)$ be an idempotent. Then the map $r_\iota \colon S(G/H)\to M$ is a retraction of $S(G/H)$ onto $M$.
\end{lemma}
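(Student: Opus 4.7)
The plan is to use idempotency of $\iota$ together with the $G$-equivariance and continuity of $r_\iota$ to show that $r_\iota$ fixes $M$ pointwise; combined with the fact (recorded before the lemma) that the image of $r_\iota$ is contained in $M$, this gives the retraction property.

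First I would spell out what the idempotency of $\iota$ says in terms of $r_\iota$. By the definition of the semigroup operation from Lemma~\ref{l-semigroup}, the equality $\iota \cdot \iota = \iota$ is exactly $r_\iota(\iota) = \iota$. So $r_\iota$ fixes the distinguished point $\iota \in M$.

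Next, I would promote this single fixed point to a dense set of fixed points by equivariance. For every $g \in G$ we have
\[
r_\iota(g\iota) = g \cdot r_\iota(\iota) = g\iota,
\]
so $r_\iota$ is the identity on the orbit $G \cdot \iota$. Since $\iota \in M$ and $M$ is minimal, $G \cdot \iota$ is dense in $M$. Because $r_\iota \colon S(G/H) \to S(G/H)$ is continuous, the set of fixed points of $r_\iota$ is closed, and therefore contains $M$. Hence $r_\iota|_M = \mathrm{id}_M$.

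Finally, recall from the paragraph preceding Lemma~\ref{l-semigroup} that $r_\iota$ takes values in $M$ (it was obtained by extending the orbital map $G/H \to G\cdot \iota \subseteq M$). Thus $r_\iota(S(G/H)) \subseteq M$ and $r_\iota$ restricts to the identity on $M$, which is precisely the statement that $r_\iota$ is a continuous retraction of $S(G/H)$ onto $M$. The argument is essentially routine once the semigroup structure of Lemma~\ref{l-semigroup} is in place; there is no real obstacle beyond correctly unwinding the definition of the product $\eta\omega = r_\omega(\eta)$ so that ``idempotent'' translates into $r_\iota(\iota) = \iota$.
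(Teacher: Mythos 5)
Your proof is correct and follows essentially the same route as the paper: idempotency gives $r_\iota(\iota)=\iota$, equivariance extends this to the orbit $G\cdot\iota$, and minimality plus continuity give $r_\iota|_M = \mathrm{id}$. The extra care you take in noting that the image lands in $M$ and that the fixed-point set is closed is fine but not a different argument.
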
 
\begin{proof}
We need to prove that $r_\iota|_M = \id$. Since $r_\iota(\iota) = \iota^2 = \iota$, by $G$-equivariance, $r_\iota$ is the identity on the orbit of $\iota$, which is dense in $M$ by minimality, whence the conclusion. 
\end{proof}

\begin{lemma}
  \label{l:romega}
Every continuous $G$-map $M \to M$ is of the form $r_\omega$ for some $\omega \in \Fix_H(M)$.
\end{lemma}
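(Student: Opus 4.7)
The plan is to exhibit the required $\omega$ explicitly by evaluating the given map at the idempotent provided by Lemma~\ref{l-retraction-idempotent}, and then to use the fact that any $G$-equivariant continuous map out of $S(G/H)$ is determined by the image of the class of $H$.

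First, fix an idempotent $\iota \in \Fix_H(M)$ (it exists by Ellis's theorem since $\Fix_H(M)$ is a compact right-topological semigroup by Lemma~\ref{l-semigroup}), and recall from Lemma~\ref{l-retraction-idempotent} that $r_\iota \colon S(G/H) \to M$ is a $G$-equivariant retraction, i.e., $r_\iota|_M = \id_M$. Now let $\phi \colon M \to M$ be an arbitrary continuous $G$-equivariant map, and set
\begin{equation*}
\omega := \phi(\iota).
\end{equation*}
Since $\iota$ is fixed by $H$ and $\phi$ is $G$-equivariant, $\omega \in \Fix_H(M)$, so $r_\omega \colon S(G/H) \to M$ is well-defined.

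Next, consider the composition $\phi \circ r_\iota \colon S(G/H) \to M$. This is a continuous $G$-equivariant map, and it sends the distinguished class $H \in G/H \subset S(G/H)$ to $\phi(r_\iota(H)) = \phi(\iota) = \omega$. But $r_\omega$ is by construction the \emph{unique} continuous $G$-equivariant map $S(G/H)\to S(G/H)$ sending $H$ to $\omega$ (as noted in the paragraph preceding Lemma~\ref{l-semigroup}, this uniqueness follows from the density of the $G$-orbit of $H$ in $S(G/H)$). Composing with the inclusion $M \hookrightarrow S(G/H)$, we conclude that $\phi \circ r_\iota = r_\omega$ as maps $S(G/H) \to S(G/H)$.

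Finally, restrict both sides to $M \subseteq S(G/H)$. Since $r_\iota|_M = \id_M$, the left-hand side becomes $\phi$, so $\phi = r_\omega|_M$, as desired. There is essentially no obstacle here once the right candidate $\omega = \phi(\iota)$ is identified; the whole argument is an application of the uniqueness of equivariant extensions from $G/H$ to $S(G/H)$, and the only prerequisite beyond that is the existence of the idempotent retraction supplied by Ellis's theorem together with Lemma~\ref{l-retraction-idempotent}.
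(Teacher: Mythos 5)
Your proof is correct and is essentially identical to the paper's: both take an idempotent $\iota$, set $\omega = \phi(\iota)$, identify $\phi \circ r_\iota$ with $r_\omega$ by the uniqueness of equivariant extensions from $G/H$ to $S(G/H)$, and conclude by restricting to $M$ using $r_\iota|_M = \id$. You merely spell out the uniqueness step and the verification that $\omega \in \Fix_H(M)$ a bit more explicitly than the paper does.
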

\begin{proof}
Let $f \colon M \to M$ be a continuous $G$-map. Let $\iota \in \Fix_H(M)$ be an idempotent. Consider $f \circ r_\iota \colon S(G/H)\to M$. As this map is continuous and equivariant, we have $f \circ r_\iota = r_\omega$ for $\omega = f(\iota)$. Since $r_\iota|_M = \id$, this implies that $f = r_\omega$. 
\end{proof}

\begin{prop} \label{p-coalescent}
$M$ is coalescent.
\end{prop}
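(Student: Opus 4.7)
The plan is to invoke Lemma~\ref{l:romega} to reduce the question to showing that for every $\omega \in S := \Fix_H(M)$, the restriction $r_\omega|_M$ is a homeomorphism, and then to exploit the semigroup structure of $S$ in the spirit of Ellis/Uspenskij. A first step is the composition rule
\begin{equation*}
r_{\omega_1} \circ r_{\omega_2} = r_{\omega_2 \omega_1},
\end{equation*}
which follows by evaluating both sides at the coset $H \in G/H$ (obtaining $r_{\omega_1}(\omega_2) = \omega_2 \omega_1$ by the definition of the semigroup product) and using that a continuous $G$-equivariant map $S(G/H) \to M$ is determined by its value at $H$. It will therefore suffice to produce, for every $\omega \in S$, an element $\omega' \in S$ such that $\omega' \omega$ is idempotent: by Lemma~\ref{l-retraction-idempotent} this yields $r_\omega \circ r_{\omega'}|_M = r_{\omega'\omega}|_M = \id_M$, showing that $r_{\omega'}|_M$ is a continuous $G$-equivariant right inverse of $r_\omega|_M$.

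To construct $\omega'$, I look at $S\omega = r_\omega(S)$. Since $S$ is closed in $M$ (as the intersection of the closed sets $\{x \in M : hx = x\}$, $h \in H$) and $r_\omega$ is continuous on $S(G/H)$, the set $S\omega$ is compact. Moreover $S \cdot S\omega \sub S\omega$, so $S\omega$ is a closed left ideal of $S$ and in particular a subsemigroup, since $(S\omega)(S\omega) \sub S(S\omega) \sub S\omega$. Being itself a compact right-topological semigroup, $S\omega$ contains an idempotent $\iota'$ by Ellis's theorem, and $\iota' = \omega'\omega$ for some $\omega' \in S$ by construction.

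At this stage, $r_\omega|_M$ is surjective and $r_{\omega'}|_M$ is injective. Applying the same procedure with $\omega'$ in place of $\omega$ produces $\omega'' \in S$ such that $r_{\omega'}|_M$ also admits a right inverse $r_{\omega''}|_M$, hence is itself surjective. Therefore $r_{\omega'}|_M$ is a bijection, and so is $r_\omega|_M = (r_{\omega'}|_M)^{-1}$. A continuous bijection between compact Hausdorff spaces is a homeomorphism, finishing the proof.

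The main point — and the only nontrivial obstacle to a direct proof — is the passage from surjectivity (which alone also follows from the minimality of $M$, since $r_\omega(M)$ is a nonempty closed $G$-invariant subset) to injectivity. What makes the semigroup argument work is that the right inverse of $r_\omega|_M$ is realized by an element $\omega' \in S$, so the construction can be iterated one more time to yield a left inverse; this relies on both $S\omega$ being a left ideal (to produce idempotents inside it) and on the right-topological structure (for $S\omega$ to be compact in the first place).
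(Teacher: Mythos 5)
Your argument is correct and follows essentially the same route as the paper: reduce to the maps $r_\omega$ via Lemma~\ref{l:romega}, use that $\Fix_H(M)\omega$ is a compact left ideal to extract an idempotent $\iota=\omega'\omega$ by Ellis's theorem, and conclude from $r_\omega\circ r_{\omega'}|_M=r_\iota|_M=\id_M$ together with surjectivity of $r_{\omega'}|_M$. The only (harmless) difference is that you re-run the idempotent construction to get surjectivity of $r_{\omega'}|_M$, whereas the paper gets it directly from minimality — a shortcut you yourself note.
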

\begin{proof}
  Let $f \colon M \to M$ be a continuous $G$-map. By minimality of $G\actson M$, $f$ is surjective. We need to show that $f$ is injective. By equivariance, we have $f(\Fix_H(M)) \sub \Fix_H(M)$. By Lemma~\ref{l:romega}, there exists $\omega \in \Fix_H(M)$ such that $f = r_\omega$. Therefore $f(\Fix_H(M)) = \Fix_H(M) \omega$ is a compact left ideal of $\Fix_H(M)$ and thus a compact subsemigroup of $\Fix_H(M)$. By Ellis's theorem, there exists an idempotent $\iota \in \Fix_H(M)\omega$. Let $\eta \in \Fix_H(M)$ be such that $f(\eta) = \eta\omega = \iota$. Let $g = r_\eta$. Now by Lemma~\ref{l:idempotent-retraction}, for all $x \in M$,
  \begin{equation*}
    (f \circ g)(x) = r_\omega (r_\eta(x)) = x\eta\omega = x\iota = r_\iota(x) = x.
  \end{equation*}
  The map $g$, being continuous and equivariant, is surjective by minimality. Since $f \circ g =\id$, it follows that $f$ is injective. 
\end{proof}
This concludes the proof of Theorem \ref{t-UMF}. It is worth pointing out the following corollary.
\begin{cor}
Let $H, H'\in \cH$ and let $M\subset S(G/ H)$ and $M'\subset S(G/H')$ be minimal $G$-invariant closed subsets. Then $M$ and $M'$ are homeomorphic as compact $G$-spaces.  
\end{cor}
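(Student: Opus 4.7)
The plan is to derive this corollary directly from Theorem \ref{t-UMF}, with essentially no additional work. The key observation is already present in the existence argument given just after the statement of Theorem \ref{t-UMF}: for any $H_0 \in \cH$, every minimal $G$-invariant closed subset of $S(G/H_0)$ is both subordinate to $\cH$ and universal among minimal compact $G$-spaces subordinate to $\cH$. Hence such a minimal subset serves, on its own, as a concrete realization of the universal minimal flow $M(G, \cH)$, and the particular choice of $H_0 \in \cH$ plays no role in the construction.

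Applying this observation once to $H$ and once to $H'$, I conclude that $M$ and $M'$ are each realizations of $M(G, \cH)$. The uniqueness clause of Theorem \ref{t-UMF} then immediately produces a $G$-equivariant homeomorphism between them. The standard way to extract this homeomorphism from the universal property is to use coalescence (Proposition \ref{p-coalescent}): universality of $M$ and $M'$ yields continuous $G$-equivariant maps $f \colon M \to M'$ and $g \colon M' \to M$, and their compositions $g \circ f \colon M \to M$ and $f \circ g \colon M' \to M'$ are $G$-equivariant self-maps and therefore homeomorphisms by coalescence (which applies to $M'$ as well since it too realizes $M(G, \cH)$). It follows that $f$ and $g$ are themselves homeomorphisms.

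I do not anticipate any real obstacle; the only thing worth double-checking is that the existence argument indeed produces a universal object starting from an arbitrary $H_0 \in \cH$, and a glance at that argument confirms that $H_0$ enters only through the observation that $H_0$ fixes a point in any $X$ subordinate to $\cH$ (via Lemma \ref{l:stab-contains-urs}), which is true for every element of $\cH$.
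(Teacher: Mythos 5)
Your proposal is correct and follows exactly the paper's own route: the paper likewise observes that $M$ and $M'$ are both models of $M(G,\cH)$ and invokes the uniqueness clause of Theorem~\ref{t-UMF}, whose proof rests on the same coalescence argument you spell out. No gaps.
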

\begin{proof}
We have shown that $M$ and $M'$ are both models for the universal space ${M}(G, \cH)$, and therefore they are isomorphic by Theorem \ref{t-UMF}. 
\end{proof}

\subsection{Minimal ideal structure}
We retain the notation from the previous subsection. The next proposition records some general properties of the semigroup $\Fix_H(M)$   (whose semigroup structure was defined in Lemma \ref{l-semigroup}). These properties are analogous to \cite[Proposition~I.2.3]{Gla-book} in the classical case. We are grateful to the anonymous referee for suggesting that they extend to this setting.
\begin{prop}
Let $G$ be a locally compact group, and $\cH$ be a URS of $G$. Let $H\in \cH$ and $M\subset S(G/H)$ be a closed minimal $G$-invariant subset. Consider the right topological semigroup $N=\Fix_H(M)$ and let $J\subset N$ be the set of idempotent elements of $N$. 
The following hold. 

\begin{enumerate}
\item \label{i-pglas-1} For every $\omega\in N$, we have $N\omega =N$, i.e., $N$ is $N$-minimal.
\item \label{i-pglas-2} For every $\eta\in J$ , the subset $\eta N$ is a group with unit element $\eta$.
\item \label{i-pglas-3} We have $N=\bigsqcup_{\eta\in J} \eta N$.
\item \label{i-pglas-4} All the groups $\eta N, \eta\in J$ are isomorphic to each other via the map $\eta N\to \theta N, \eta \omega \mapsto \theta \eta \omega =\theta \omega$ (for $\eta, \theta \in J$).
\item \label{i-pglas-5} For every $\eta\in J$, the map $\eta \omega\mapsto r_{\eta \omega}$ is an isomorphism of $\eta N$ onto $\Aut_G(M)$, the group  of $G$-automorphisms of $M$.
\end{enumerate}
\end{prop}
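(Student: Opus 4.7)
My plan is to proceed through (i)--(v) in order, exploiting the observation that once (i) is proved, the rest is essentially formal right-topological semigroup theory in the spirit of Ellis, combined with Lemma \ref{l:romega}. The one place where genuinely new input is needed is (i), and there the essential tool will be the coalescence of $M$ established in Proposition \ref{p-coalescent}. Specifically, for any $\omega\in N$, the restriction $r_\omega|_M\colon M\to M$ is a continuous $G$-equivariant self-map of $M$, hence a homeomorphism by coalescence; since both $r_\omega|_M$ and its inverse are $H$-equivariant, they preserve $N=\Fix_H(M)$, and this gives $N\omega=r_\omega(N)=N$. I would then immediately extract the useful consequence $\theta\eta=\theta$ for all $\theta\in N$ and all idempotent $\eta\in N$: by (i) one writes $\theta=\alpha\eta$ for some $\alpha\in N$, and then $\theta\eta=\alpha\eta^{2}=\alpha\eta=\theta$.

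From this point, (ii) and (iii) follow by short semigroup manipulations. For (ii), $\eta$ is a left identity on $\eta N$ by idempotency and a right identity by the consequence above; given $\eta\omega\in\eta N$, (i) supplies $y\in N$ with $y(\eta\omega)=\eta$, and then $\eta y\in\eta N$ is a left inverse of $\eta\omega$. Iterating shows every element of $\eta N$ admits a left inverse, and a standard semigroup argument upgrades left inverses to two-sided inverses, so $\eta N$ is a group. For (iii), the set $E_\omega=\{y\in N:y\omega=\omega\}$ is a nonempty (by (i)) closed subsemigroup of $N$, so by the Ellis idempotent lemma it contains an idempotent $\eta$, giving $\omega=\eta\omega\in\eta N$. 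The main subtlety is disjointness: if $\omega\in\eta_1 N\cap\eta_2 N$, then (ii) supplies $\omega'\in\eta_1 N$ with $\omega\omega'=\eta_1$, whence $\eta_2\eta_1=\eta_2\omega\omega'=\omega\omega'=\eta_1$, while the key consequence gives $\eta_2\eta_1=\eta_2$, forcing $\eta_1=\eta_2$.

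Property (iv) will be handled by a short direct computation: using $\omega_1\eta=\omega_1$ (a special case of the consequence) one checks that the group law on $\eta N$ reads $(\eta\omega_1)(\eta\omega_2)=\eta\omega_1\omega_2$, so the map $\phi(\eta\omega)=\theta(\eta\omega)=\theta\omega$ intertwines the group laws on $\eta N$ and $\theta N$ and admits $\theta\omega\mapsto\eta\omega$ as a two-sided inverse. For (v), I would define $\Phi\colon\eta N\to\Aut_G(M)$ by $\Phi(\eta\omega)=r_{\eta\omega}|_M$; this lands in $\Aut_G(M)$ by coalescence. Surjectivity comes from Lemma \ref{l:romega} combined with the identity $r_{\omega'}|_M=r_{\eta\omega'}|_M$, which follows from $\xi\eta=\xi$ for all $\xi\in M$, i.e.\ from Lemma \ref{l-retraction-idempotent}. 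Injectivity is immediate by evaluating at $\eta$: $r_{\eta\omega}(\eta)=\eta\omega=\omega$. Compatibility with the group operations follows from the identity $r_\omega\circ r_\eta=r_{\eta\omega}$ (by uniqueness of the $G$-equivariant extension from $G/H$), modulo the standard identification of antihomomorphisms with isomorphisms by passing to the opposite group. I expect the only real obstacle throughout to be step (i): this is the single place where the geometric content of our setting, namely the coalescence of $M$, enters; everything else is a formal manipulation of idempotents in compact right-topological semigroups.
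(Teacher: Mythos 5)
Your proposal is correct and follows essentially the same route as the paper: part (i) via the coalescence of $M$, parts (ii)--(iv) by formal right-topological-semigroup manipulations together with Ellis's idempotent theorem, and part (v) via Lemmas \ref{l:romega} and \ref{l-retraction-idempotent}. The only (harmless) deviations are that you invert $r_\omega|_M$ directly by equivariance of the inverse rather than by representing it as some $r_\eta$ via Lemma \ref{l:romega}, you derive the right-identity property $\theta\eta=\theta$ on $N$ from (i) rather than from Lemma \ref{l-retraction-idempotent}, and you are more explicit than the paper about the disjointness in (iii) and about the homomorphism-versus-antihomomorphism convention in (v).
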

\begin{proof} For completeness, we repeat the arguments in \cite[Proposition~I.2.3]{Gla-book} with minor modifications.

\ref{i-pglas-1} Consider the map $r_\omega\colon M\to M$. It is clear that it is a continuous $G$-map, and it follows from Proposition \ref{p-coalescent} that it is invertible. By Lemma~\ref{l:romega}, there exists $\eta\in N$ such that $r_\eta=r_\omega^{-1}$. It follows that $N=r_\omega\circ r_\eta(N)= N\eta \omega\subset N\omega$. Conversely, it is clear that $N\omega \subset N$ and therefore $N=N\omega$. 

\ref{i-pglas-2} For every $\theta\in N$ we have $\eta (\eta \theta)= \eta^2\theta= \eta \theta$, showing that $\eta$ is a left unit in $\eta N$. The fact that it is a right unit follows from Lemma \ref{l-retraction-idempotent}. It remains to show that every element  $\omega\in\eta N$ has an inverse in $\eta N$. By part \ref{i-pglas-1}, we have $N\omega=N$ and therefore there exists $\theta \in N$ such that $\theta \omega=\eta$. Using part \ref{i-pglas-1} again, we have $N\theta=N$ and thus there exists $\rho \in N$ such that $\rho \theta=\eta$. It follows that $\omega =\eta \omega=(\rho \theta) \omega=\rho(\theta \omega)=\rho \eta= \rho$ (the last equality uses Lemma~\ref{l-retraction-idempotent}). Therefore $\eta \theta =(\theta \omega ) \theta =\theta (\rho \theta)= \theta \eta=\theta$. It follows that $\theta\in \eta N$ and $\theta \omega=\omega \theta=\eta$. 

\ref{i-pglas-3} The fact that the sets $\eta N, \eta\in J$ are pairwise disjoint is a consequence of \ref{i-pglas-2}. It remains to be checked that their union is equal to $N$. Let $\omega \in N$. The set $\{\theta \in N : \theta \omega= \omega\}$ is non-empty by part \ref{i-pglas-1} and therefore it is a non-trivial closed subsemigroup of $N$. By Ellis's theorem, it contains an idempotent $\eta$, and we have $\omega \in \eta N$. 

\ref{i-pglas-4} The claim that $\theta \eta \omega=\theta \omega$ follows from Lemma \ref{l-retraction-idempotent}. This shows in particular that the map in the statement is surjective. It is a group homomorphism because $\theta (\eta \omega)(\eta \omega')= \theta (\eta \omega \theta) (\eta \omega')=(\theta \eta \omega) (\theta \eta \omega')$ (the first equality uses \ref{i-pglas-2}).  Finally, it is invertible with inverse $\theta \omega\mapsto \eta \theta \omega=\eta \omega$ and therefore it is a group isomorphism. 

\ref{i-pglas-5} The map $\eta \omega \mapsto r_{\eta \omega}$ takes values in $\Aut_G(M)$ by Proposition \ref{p-coalescent}, and it is clear that it is a group homomorphism. If $r_{\eta \omega}=\operatorname{Id}_M$, then the element $\eta \omega$ is a right unit in $M$ which belongs to $\eta N$. Since the only right unit in $\eta N$ is $\eta$, we deduce that $\eta \omega=\eta$, showing that the map is injective.  To see that it is surjective, let $f\in \Aut_G(M)$. By Lemma \ref{l:romega}, there exists $\omega \in N$ such that $f=r_\omega$. But for every $\theta \in M$ we have $r_\omega(\theta)=\theta \omega=\theta \eta \omega =r_{\eta \omega}$. Therefore $r_{\eta \omega}=f$. \qedhere

\end{proof}

\subsection{Metrizability of $M(G, \cH)$}
It is a natural question for which pairs $(G, \cH)$ the relative universal minimal flow $M(G, \cH)$ can be identified with a more familiar, concrete $G$-space. A case in which this can be done is when the URS $\cH$ contains a cocompact subgroup $H$ (and thus is necessarily a single compact conjugacy class). In this case, $M(G, \cH)$ can be identified with the homogeneous space $G/H$. Our last result, whose proof relies on the results in \cite{BenYaacov2017}, says that there is little hope beyond this case.
\begin{theorem} \label{t:metrisable}
  Let $G$ be a locally compact second countable group and let $\cH$ be a URS of $G$. Then $M(G, \cH)$ is metrizable iff $\cH$ contains a cocompact subgroup.
\end{theorem}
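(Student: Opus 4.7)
\emph{Forward direction.} Suppose $H \in \cH$ is cocompact. The $G$-orbit $C=\{gHg^{-1}:g\in G\}$ of $H$ in $\Sub(G)$ is a continuous image of the compact space $G/N_G(H)$ (compact because $H\leq N_G(H)$), hence closed; minimality of the URS $\cH$ then forces $\cH=C$. The homogeneous space $G/H$ is therefore a compact metrizable minimal $G$-flow whose collection of stabilizers is exactly $\cH$, so in particular it is subordinate to $\cH$. To identify it with $M(G,\cH)$, I would check that $G/H$ itself enjoys the universal property of Theorem~\ref{t-UMF}: given a minimal $G$-flow $Y$ subordinate to $\cH$, Lemma~\ref{l:stab-contains-urs} yields a point $y\in Y$ fixed by $H$, and $gH\mapsto gy$ defines a factor map $G/H\to Y$. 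Uniqueness in Theorem~\ref{t-UMF} then gives $M(G,\cH)\cong G/H$, which is metrizable.

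\emph{Reverse direction, comeagre orbit.} Now suppose $M:=M(G,\cH)$ is metrizable. The crucial step, which is where \cite{BenYaacov2017} enters, is to show that $M$ has a comeagre $G$-orbit. The result of Ben Yaacov--Melleray--Tsankov establishes this for the usual universal minimal flow $M(G)$ of a Polish group using the Ellis semigroup structure on $M(G)$. My plan is to adapt their argument to the relative setting by substituting the right-topological semigroup $\Fix_H(M)$ constructed in Section~3.2 for the Ellis semigroup of $M(G)$; the fact that $\Fix_H(M)$ admits idempotents and that all its minimal left ideals correspond to automorphisms of $M$ suggests that the Ramsey-theoretic/amalgamation input can be replayed verbatim. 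Carrying out this adaptation is where I expect the main difficulty to lie; the remaining steps are essentially formal once a comeagre orbit is produced.

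\emph{From a comeagre orbit to a cocompact stabilizer.} Let $\omega\in M$ have comeagre orbit. Since $G$ is $\sigma$-compact, $G\cdot\omega=\bigcup_n K_n\omega$ is an $F_\sigma$ union of compact (hence closed) sets, and Baire's theorem makes $G\cdot\omega$ nonmeagre in $M$. Thus some $K_n\omega$ has nonempty interior $U$, and $G\cdot U\subseteq G\cdot\omega$ is a nonempty open $G$-invariant subset of $M$, so by minimality $G\cdot U=M$; in particular $G\cdot\omega=M$. Since $M$ is trivially $G_\delta$ in itself, Effros' theorem promotes the continuous equivariant bijection $G/G_\omega\to M$ to a homeomorphism, which makes $G_\omega$ cocompact (because $M$ is compact). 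Continuity of the stabilizer map on $M(G,\cH)$ (Theorem~\ref{t-UMF}) finally places $G_\omega$ in $\cH$, so $\cH$ contains the desired cocompact subgroup.
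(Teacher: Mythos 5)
Your argument follows essentially the same route as the paper: the forward direction identifies $M(G,\cH)$ with $G/H$ exactly as the paper intends (it only remarks this direction is ``clear''), and the reverse direction runs comeagre/$G_\delta$ orbit $\Rightarrow$ transitivity via Baire and $\sigma$-compactness $\Rightarrow$ cocompactness via Effros $\Rightarrow$ $G_\omega \in \cH$ because the point stabilizers of $M(G,\cH)$ are precisely $\cH$. The one step you flag as the main difficulty --- extracting a comeagre (or $G_\delta$) orbit from metrizability --- is exactly the step the paper also leaves to the reader, saying only that one follows ``the argument for the proof of Theorem~1.2 in \cite{BenYaacov2017}'', so your diagnosis of where the real work lies is accurate and the rest of your proof matches the paper's.
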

\begin{proof}
  The ``if'' direction is clear. For the other, suppose that $M(G, \cH)$ is metrizable. Following the argument for the proof of Theorem~1.2 in \cite{BenYaacov2017}, we conclude that $M(G, \cH)$ contains a $G_\delta$ orbit $G \cdot x_0$. As $G$ is $\sigma$-compact, the orbit $G \cdot x_0$ is also $F_\sigma$, implying that its complement is $G_\delta$. If the complement is non-empty, it must be dense by minimality, contradicting the Baire category theorem. Thus the action $G \actson M(G, \cH)$ is transitive and if we put $H = G_{x_0}$, Effros's theorem (see, e.g., \cite{Hjorth2000}*{Theorem~7.12})  implies that $H$ is cocompact. As a consequence of Theorem \ref{t:Chabauty}, the point stabilizers of $G\actson M(G, \cH)$ are precisely the elements of $\cH$. Therefore $\cH$ contains a cocompact subgroup as claimed.
\end{proof}

\bibliography{realization-URS}
\end{document}